\documentclass[11pt]{article}
\usepackage{amssymb}
\usepackage{amsmath}
\usepackage{graphicx}
\usepackage{color}
\usepackage{amsthm,amsmath,amssymb}
\usepackage{mathrsfs}
\usepackage{helvet}
\usepackage{graphicx} 
\usepackage{url}      
\usepackage{bm}        
\usepackage{multirow}
\usepackage{booktabs}
\usepackage{algorithm}
\usepackage{algorithmic}
\usepackage{graphicx}
\usepackage{subfigure}
\usepackage{float}
\usepackage{makecell}
\usepackage{hyperref} 
\usepackage{indentfirst}
\makeatletter

\newcommand{\Rmnum}[1]{\expandafter@slowromancap\romannumeral #1@}
\makeatother

\usepackage{subcaption}
\graphicspath{{paperpictures/}}
\def\dfrac{\displaystyle\frac}

\newtheorem{theorem}{Theorem}[section]                   

\newtheorem{lemma}{Lemma}[section]

\newtheorem{assumption}{Assumption} 
\numberwithin{equation}{section}

\begin{document}
	\title{Operator Learning for PDE Backstepping Control of Parabolic Equations on Time-Varying Domains 
	}
	\author{Jinrun Yan,\quad Kai Liu,\quad Zhong-Jie Han \\
		School of Mathematics and KL-AAGDM, Tianjin University\\
		Tianjin 300350,  P. R. China 
	} 
    \date{}
	\maketitle
		\begingroup
\renewcommand{\thefootnote}{} 
\footnotetext{E-mail: yjr8491@gmail.com (J. Yan); k\_liu@tju.edu.cn (K. Liu); zjhan@tju.edu.cn (Z.J. Han).}
\endgroup
		\begin{abstract}
        {This paper develops a learning-based boundary control framework for stabilizing a parabolic equation defined on time-varying spatial domain. Although the partial differential equation (PDE) backstepping method provides a systematic theoretical framework for such moving-boundary systems, its real-time implementation is hindered by the need to repeatedly solve time-varying kernel PDEs on evolving domains. To overcome this limitation, we first formulate the time-varying backstepping design as an operator that maps the moving-boundary trajectory to the corresponding backstepping kernel. By mapping the time-varying domain of the backstepping kernel equation onto a fixed reference domain, we establish the continuous dependence of the kernel on the moving-boundary trajectory, which provides the theoretical basis for approximating the backstepping design operator by a neural operator. Based on the approximate kernel operator, we construct the corresponding boundary feedback controller to stabilize the system. It is shown that the closed-loop system admits an exponential decay estimate on any prescribed finite time interval. For numerical implementation, DeepONet is employed to learn the time-varying kernel operator from offline-generated numerical kernel solutions and is subsequently deployed online to generate the required time-varying kernels without repeatedly solving the kernel PDE. Numerical benchmarks demonstrate that the proposed neural-operator-based implementation bypasses repeated online solution of the time-varying kernel PDE, achieves a significant acceleration of close to three orders of magnitude
        compared with conventional numerical kernel solvers, and thus enables real-time stabilization of the system  on time-varying spatial domain.}
		\end{abstract}

		\noindent {\bf Key words:}\hspace{2mm}  parabolic equation; time-varying domain; backstepping method; DeepONet; operator learning.
  \section{Introduction}
  \subsection{Background}
  The control of systems governed by parabolic partial differential equations (PDEs) is a hot topic of modern control theory \cite{coron2007control,krstic2008boundary}. Among various boundary control techniques, the PDE backstepping method has emerged as a highly systematic and powerful framework for designing explicit boundary feedback laws \cite{smyshlyaev2004closed,smyshlyaev2005backstepping}.
While the stabilization of PDEs on fixed spatial domains has been extensively investigated, considerably fewer results are available for PDEs defined on time-varying domains, where the spatial boundary $l(t)$ evolves dynamically over time \cite{koga2018control,meurer2009tracking}.
  Such moving-boundary problems naturally arise in a wide range of engineering applications, including Stefan problems for phase changes \cite{koga2020materials}, crystal growth processes \cite{maidi2010boundary}, catalyst shrinkage \cite{izadi2015pde}, and chemical species generation and consumption \cite{armaou2001computation}.
  
  However, the practical implementation of backstepping-based controllers on time-varying domains faces a severe computational bottleneck. Unlike systems with fixed domains and time-invariant parameters, where the associated backstepping kernel $k(x,y)$ is stationary and can be computed entirely offline \cite{smyshlyaev2004closed}, the presence of a moving boundary $l(t)$ induces a time-varying transformation \cite{espitia2019boundary}. Consequently, the corresponding backstepping kernel $k(x,y,t)$ is governed by a two-dimensional parabolic evolution equation \cite{lyu2026operator}. In practice, evaluating this kernel equation necessitates dynamic grid remeshing and continuous numerical integration at each time instant \cite{jadachowski2012efficient,woittennek2017approximation}. The repeated online solution of this second-order PDE introduces a prohibitive computational burden, rendering real-time feedback control largely infeasible.
  
  To alleviate the overwhelming computational burden associated with traditional PDE solvers, recent advances in scientific machine learning have introduced Neural Operators (NOs) \cite{li2020fourier}, which learn nonlinear mappings between function spaces. Notably, the Deep Operator Network (DeepONet) architecture, supported by the Universal Approximation Theorem for operators  \cite{deng2022approximation}, has demonstrated exceptional efficiency in surrogate modeling for complex PDEs \cite{lu2021learning}. In the realm of PDE control, NOs have been recently deployed to approximate backstepping kernels, thereby  bypassing online PDE numerical computations for first-order hyperbolic \cite{wang2025backstepping} and parabolic systems \cite{bhan2023neural,krstic2024neural,lyu2026operator}. Furthermore, operator-learning frameworks have been successfully extended to accommodate delay cases \cite{wang2025deep} and adaptive control schemes \cite{bhan2023operator}.
  
  Despite these remarkable advancements, existing NO-based control frameworks are predominantly restricted to fixed spatial domains. Approximating the kernel operator on a time-varying domain remains an open and profound challenge. The primary difficulty stems from the fact that the evolving boundary severely complicates the topological compactness prerequisite dictated by the Universal Approximation Theorem \cite{deng2022approximation}, and complicates the analysis under coordinate transformations, which threatens the rigorous preservation of closed-loop stability.

    \subsection{Contributions and Novelties}
  To address this gap, this paper presents a neural-operator-based framework for the stabilization of parabolic PDEs on time-varying domains. By learning the mapping from the trajectory functions \(l(t)\)  to the time-varying kernel functions \(k(\cdot,\cdot,t)\), our approach resolves the online computational burden while providing theoretical guarantees. The main contributions of this work are summarized as follows:
  
\begin{enumerate}        
    \item[$\bullet$] 
    We prove the existence of a neural-operator approximation to the operator
   \(\mathcal{K}: l(t)\mapsto k(\cdot,\cdot,t)\), which maps a time-varying boundary trajectory \(l(t)\) to the corresponding backstepping kernel \(k(\cdot,\cdot,t)\). Unlike fixed-domain backstepping problems, the kernel here is defined on an \(l(t)\)-dependent domain, which prevents a direct application of standard approximation theory. By introducing a coordinate transformation, the kernel PDEs defined on time-varying domains are converted into equivalent kernel PDEs with time-varying coefficients on a fixed domain. Based on the successive integration method, we further prove the continuous dependence of the kernel function on the moving-boundary trajectory $l(t)$, which ensures that the associated kernel operator satisfies the assumptions of the Universal Approximation Theorem for operators. This establishes the theoretical learnability of time-varying backstepping kernels over a class of moving-boundary trajectories. 
    
    \item[$\bullet$]  
    We establish the exponential stability of the closed-loop system under the approximate backstepping-based controller derived from DeepONet. We first show that the backstepping kernel operator can be approximated by a neural operator with arbitrary accuracy. Based on the resulting approximate kernel generated by the neural operator, we then construct a boundary feedback law by replacing the backstepping kernel with its approximation. It is further shown that the resulting closed-loop system remains exponentially stable.

    \item[$\bullet$]  We propose a DeepONet-based framework to completely circumvent the prohibitive online computational burden associated with time-varying backstepping kernel PDEs. Compared with the existing numerical method presented in \cite{izadi2015pde,jadachowski2012efficient}, the proposed approach achieves a computational acceleration of close to three orders  of magnitude ($10^3\times$), thereby rendering the real-time boundary stabilization of moving-boundary systems practically viable.
\end{enumerate}
\subsection{Organization}
The remainder of this paper is organized as follows. Section \ref{sec:problem} establishes the problem formulation for the parabolic equation on a time-varying domain and introduces the backstepping kernel equation. Section \ref{sec:approx_stab} investigates the approximate backstepping stabilization strategy on the time-varying domain. Within this section, Section \ref{subsec:deeponet} details the approximation of the backstepping kernel operator via coordinate transformations, while Section \ref{subsec:stability} conducts the rigorous Lyapunov stability analysis for the closed-loop system. Section \ref{sec:simulation} validates the computational advantages of the proposed framework through numerical simulations. Finally, Section \ref{sec:conclusion} concludes the paper.

\section{Backstepping Design on the Time-varying Domain}
\label{sec:problem}
 In this section, we first introduce the parabolic equation defined on a time-varying domain and then give the backstepping-based stabilization results from \cite{izadi2015pde}. 

 In many physical, chemical, and engineering applications, such as phase-change processes and crystal growth, the spatial domain may evolve with time, see \cite{armaou2001computation,izadi2015pde,koga2020materials,maidi2010boundary}. This type of system on a time-varying domain has been introduced in \cite{izadi2015pde}. Here, we consider the following one-dimensional parabolic system on the time-varying domain:
\begin{equation}
\label{eq:u_system_original}
\begin{cases}
u_t(x,t)=\alpha u_{xx}(x,t)-\dot l(t)u_x(x,t)+\lambda_0 u(x,t), 
& 0<x<l(t),\ t>t_0,\\
u(0,t)=0,\\[1mm]
u_x(l(t),t)=\tilde{U}(t),
\end{cases}
\end{equation}where the initial value $u(x,t_0)=u_0(x)$ with $t_0\geq0$, $\alpha>0$ is the diffusion coefficient, $\lambda_0>0$ is the reaction coefficient and  $\tilde{U}(t)$ is the boundary control input. $l(t)$ denotes stictly positive moving-boundary function, and $\dot{l}(t)$ denotes its time derivative. Additionally, we make the following assumption for moving-boundary function $l(t)$, which is also made in \cite[Assumption 2]{izadi2015pde}.
\begin{assumption}
\label{assum:stability} 
The moving-boundary function $l(t)$ is analytic and satisfies the following condition: there exists a positive constant $\bar D$, independent of $t$ and $j$, such that for all $t>t_0$ and all non-negative integers $j$,
\[
    |\partial_t^j l(t)|\le \bar D^{j+1}j!.
\]
\end{assumption}
 To eliminate advection term in \eqref{eq:u_system_original}, {we first introduce the following transformation}
\begin{equation}
\label{eq:advection transformation}
    u(x, t) = v(x, t)e^{\int_0^x \frac{\dot{l}(t)}{2\alpha} dy} = v(x, t)e^{\frac{\dot{l}(t)x}{2\alpha}},
\end{equation}
which further yields that
\begin{equation}
\label{eq:plant_v1}
\begin{cases}
v_t(x,t)=\alpha v_{xx}(x,t)+\lambda(x,t)v(x,t), & 0<x<l(t),\ t>t_0,\\[1mm]
v(0,t)=0,\\[1mm]
v_x(l(t),t)+\dfrac{\dot l(t)}{2\alpha}v(l(t),t)=U(t),
\end{cases}
\end{equation}
with the initial value $v(x,t_0)=v_0(x)$
and $\lambda(x, t) = \lambda_0 - \frac{\dot{l}^2(t) + 2\ddot{l}(t)x}{4\alpha}$. Since the term $e^{\frac{\dot l(t)x}{2\alpha}}$
is bounded in $L^\infty(0,l(t))$ for all \(t>t_0\), it follows from \eqref{eq:advection transformation} that the exponential stability of \(v(x,t)\) implies the exponential stability of \(u(x,t)\).
Following \cite{izadi2015pde}, consider the backstepping transformation for system \eqref{eq:plant_v1}:
\begin{equation}
\label{eq:exact_transform}
w(x,t)=v(x,t)-\int_0^x k(x,y,t)v(y,t)\,dy
\end{equation}
to derive the following target system:
\begin{equation}
\label{eq:target_exact}
\begin{cases}
w_t(x,t)=\alpha w_{xx}(x,t)-cw(x,t), & 0<x<l(t), \ t>t_0,  \\[1mm]
w(0,t)=0,\\[1mm]
w_x(l(t),t)=-\dfrac{\dot l(t)}{2\alpha}w(l(t),t)
\end{cases}
\end{equation}
with the initial value $w(x,t_0)=w_0(x)$. 
 The exponential stability of the target system in the $L^2$-norm sense was proven in \cite[Lemma 4]{izadi2015pde}. For initial data $w(\cdot,0)\in L^2(0,l(t_0))$, the well-posedness of the system \eqref{eq:target_exact} is guaranteed, see \cite{ng2012optimal}.
It is worth noting that the backstepping transformation \eqref{eq:exact_transform} converts original system \eqref{eq:plant_v1} into the stable target system \eqref{eq:target_exact} with a tuning parameter $c$, which is directly related to the decay rate of system \eqref{eq:target_exact}. 
The kernel function $k(x, y, t)$ satisfies 
\begin{equation}
\label{eq:kernel_pde_original}
\begin{cases}
k_t(x,y,t)=\alpha\big(k_{xx}(x,y,t)-k_{yy}(x,y,t)\big)-(\lambda(y,t)+c)k(x,y,t),\\[1mm]
k(x,0,t)=0,\\[1mm]
k(x,x,t)= -\frac{1}{2\alpha}\int_0^x (\lambda(s,t)+c)\,ds.
\end{cases}
\end{equation}
Here, the kernel $k(x,y,t)$ is defined on the time-varying triangular domain
$S(t):=\{(x,y):0\le y\le x\le l(t)\}$. Similar to \cite{meurer2009tracking}, the initial conditions at $t=t_0$ for $v(x,t), w(x,t)$ and $k(x,y,t)$ have to satisfy the constraint
\begin{equation}
    \int_{0}^{x} k(x,y, t_0)v_0(y)\mathrm{d}y = v_0(x) - w_0(x), \quad x \in [0,l(t_0)]. 
\end{equation}
According to \cite{izadi2015pde}, the state-feedback control can be correspondingly defined as follows:
\begin{equation}
\label{eq:control_original}
U(t)=
\int_0^{l(t)}
\left[
\frac{\dot l(t)}{2\alpha}k(l(t),y,t)+k_x(l(t),y,t)
\right]v(y,t)\,dy
+k(l(t),l(t),t)v(l(t),t).
\end{equation}
Then, inspired by \cite{izadi2015pde}, introduce the following transformation
\begin{equation}
\label{eq:transformation}
    \xi = \frac{x+y}{l(t)}, \quad \eta = \frac{x-y}{l(t)},
\end{equation}
where $(\xi,\eta)\in\bar S:=\{(\xi,\eta):0\le \eta\le \xi\le 2-\eta\}$. {Under the transformation \eqref{eq:transformation}, the original kernel \(k(x,y,t)\) on the time-varying domain \(S(t)\) is mapped to a kernel on the fixed reference domain \(\bar S\). Specifically, we define
\begin{equation}\label{kdefinition}
  K(\xi, \eta, t)
:= k\left(\frac{l(t)}{2}(\xi+\eta),\frac{l(t)}{2}(\xi-\eta),t\right)
\end{equation}
for \((\xi,\eta)\in\bar S\). In the sequel, \(K(\xi, \eta, t)\) is used to denote the transformed kernel in the \((\xi,\eta)\) coordinates.}
By \cite{izadi2015pde}, we get that kernel function $K(\xi, \eta, t)$ satisfies

\begin{equation}\label{eq:xietat}
\begin{cases}
\partial_t K(\xi, \eta, t) = \frac{4\alpha}{l^2(t)}\partial_\xi\partial_\eta K(\xi, \eta, t) + \frac{\dot{l}(t)}{l(t)}\big(\xi\partial_\xi K(\xi, \eta, t)   \\
\hspace{2.2cm}+ \eta\partial_\eta K(\xi, \eta, t)\big)- \bar{\lambda}(\xi, \eta, t)K(\xi, \eta, t),\\
K(\xi,\xi, t) = 0, \\
K(\xi, 0, t) = f(\xi, t),
\end{cases} 
\end{equation}
where 
\begin{align}\label{lambdaf}
\bar{\lambda}\big(\xi, \eta, t\big) 
&= -\frac{\dot{l}^2(t) + l(t)\ddot{l}(t)(\xi - \eta)}{4\alpha} + \lambda_0 + c, \enspace
f\big(\xi, t\big) = \frac{\xi l(t)}{4\alpha} \left[ \frac{2\dot{l}^2(t) + \xi l(t)\ddot{l}(t)}{8\alpha} - (\lambda_0 + c) \right].
\end{align}
Meanwhile, we also obtain from \cite{izadi2015pde} that \eqref{eq:xietat} admits a unique solution $K(\xi, \eta, t)\in C^2\big(\bar{S}\times[t_0,T]\big)$. Furthermore, {there exists a constant $M>0$ such that the solution to \eqref{eq:xietat} satisfies the uniform bound}
\begin{equation}
    \label{bound}
    \|K(\cdot,\cdot,t)\|_{L^{\infty}(\bar S)}+\|\partial_t K(\cdot,\cdot,t)\|_{L^{\infty}(\bar S)} \le M
    \qquad \forall t\in[t_0,T],
\end{equation}
where $\|\cdot\|_{\infty}$ denotes the supremum norm.
 Thanks to the analyticity of $l(t)$ and the invertibility of the mapping \eqref{eq:transformation}, the well-posedness established in the $(\xi, \eta)$ coordinates is mathematically equivalent to that in the original $(x, y)$ coordinates. 

 In the above analyses, the kernel functions $K(\xi, \eta, t)$ were typically computed through successive integrations over the triangular spatial domain $\bar{S}$. This approach constructs the kernel as a series of approximations, which is straightforward to implement but computationally expensive \cite{izadi2015pde}. 
In the following sections, we propose a more efficient DeepONet-based method for computing the kernel functions. Moreover, the boundary feedback controller constructed from the approximate kernel generated by DeepONet is shown to guarantee the exponential stability of the closed-loop system. This approach significantly reduces the computational cost.

 \section{Stabilization under DeepONet Feedback Control on the Time-varying Domain}
 \label{sec:approx_stab}

In this section, we construct an approximate kernel for the exact kernel $K(\xi,\eta,t)$. Based on this approximate kernel, we show that the corresponding boundary feedback control law still ensures the exponential stability of system \eqref{eq:plant_v1}.

\subsection{Approximation of backstepping kernel operator with DeepONet} \label{subsec:deeponet}

Let $\mathcal{U}$ be a compact subset of the real analytic function space $C^\omega([t_0,T])$ consisting of strictly positive functions, equipped with the metric induced by the $C^2([t_0,T])$ norm for any given $T>t_0$.
Define the kernel operator
\begin{equation}\label{k}
    \begin{split}
        &\mathcal{K}:\mathcal{U}\to \mathcal{X}_K, \\
        &\mathcal{K}(l):= K(\xi, \eta, t),
    \end{split}
\end{equation}
where $K(\xi, \eta, t)$ is the unique solution to \eqref{eq:xietat} with respect to the function $l(t)$ and 
\begin{equation*}
    \mathcal{X}_K = \big\{K \in C(\bar S\times[t_0,T]) \mid K(\xi,\xi, t) = 0, \; \forall(\xi,\eta)\in\bar S, \forall t \in [t_0, T] \big\}.
\end{equation*}
According to the existence-uniqueness of the kernel $K(\xi, \eta, t)$, the operator $\mathcal{K}$ is well-defined.
Based on the above definition, we further give the following lemma to explain the continuity of the operator $\mathcal{K}$.

\begin{lemma}\label{lem:continue}
Consider the kernel operator \(\mathcal K\) defined in \eqref{k}.
Then, for any given $T>t_0$, the operator satisfies $
\mathcal K\in
C\bigl(\mathcal{U},\,C(\bar S\times[t_0,T])\bigr)$.
\end{lemma}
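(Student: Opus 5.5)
We will prove continuity of $\mathcal K$ through a stability estimate on the difference of two kernels. Fix $l_1,l_2\in\mathcal U$ and let $K_i=\mathcal K(l_i)$ be the corresponding solutions of \eqref{eq:xietat}. We show that $\|K_1-K_2\|_{C(\bar S\times[t_0,T])}\to0$ as $\|l_1-l_2\|_{C^2([t_0,T])}\to0$.

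\textbf{Step 1 (uniform bounds on $\mathcal U$).} Compactness of $\mathcal U$ in $C^2$, together with strict positivity of each element, gives constants $0<\underline l\le l(t)\le \bar l$ and $|\dot l|,|\ddot l|\le D$, uniform over $l\in\mathcal U$ and $t\in[t_0,T]$. The coefficients of \eqref{eq:xietat} are $4\alpha/l^2$, $\dot l/l$, $\bar\lambda$ and the boundary datum $f$ from \eqref{lambdaf}. They involve only $l,\dot l,\ddot l$ and are locally Lipschitz in these quantities on this bounded set. Hence there is $L$ such that
\[
\Big\|\tfrac{4\alpha}{l_1^2}-\tfrac{4\alpha}{l_2^2}\Big\|_\infty+\Big\|\tfrac{\dot l_1}{l_1}-\tfrac{\dot l_2}{l_2}\Big\|_\infty+\|\bar\lambda_1-\bar\lambda_2\|_\infty+\|f_1-f_2\|_\infty\le L\|l_1-l_2\|_{C^2}.
\]
We also need the uniform bound \eqref{bound}, with $M$ independent of $l\in\mathcal U$. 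We obtain it from the successive-integration construction of \cite{izadi2015pde}, whose majorants depend only on the constants above.

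\textbf{Step 2 (integral reformulation).} Following the successive integration method, we rewrite \eqref{eq:xietat} as a fixed-point equation $K=\Phi_l[K]$ on $\bar S\times[t_0,T]$. This is done by integrating the principal part $\frac{4\alpha}{l^2}\partial_\xi\partial_\eta K$ in $\eta$ from $0$ and in $\xi$ from $\eta$, using $K(\xi,\xi,t)=0$ and $K(\xi,0,t)=f(\xi,t)$. The remaining terms $\frac{l^2}{4\alpha}\big(\partial_tK-\frac{\dot l}{l}(\xi K_\xi+\eta K_\eta)+\bar\lambda K\big)$ then appear under the double integral. For the difference $\delta K=K_1-K_2$ this gives
\[
\delta K=\Phi_{l_1}[K_1]-\Phi_{l_2}[K_2]=\big(\Phi_{l_1}[K_1]-\Phi_{l_1}[K_2]\big)+\big(\Phi_{l_1}[K_2]-\Phi_{l_2}[K_2]\big).
\]
By Step 1 and the uniform bounds on $K_2$ and its derivatives, the second bracket is $O(\|l_1-l_2\|_{C^2})$.

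\textbf{Step 3 (the main obstacle).} The first bracket is linear in $\delta K$ but contains $\partial_t\delta K$ and first spatial derivatives, so a plain Gronwall argument in $C^0$ does not close. There are two ways to handle this. The preferred route is to work in the weighted norm used for the power-series construction in \cite{izadi2015pde}, namely the $t$-analytic expansion $K=\sum_n K_n(\xi,\eta)\,$ coefficients built from $\partial_t^j l$. In that norm, each iterate of the difference satisfies a bound of the form $C^n\|l_1-l_2\|\cdot(\text{majorant})^n/n!$, and summing the series yields a Lipschitz-type estimate on $\|\delta K\|_\infty$.

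The alternative is to avoid derivative loss by proving only qualitative continuity via compactness. The bound \eqref{bound} and the $C^2$ regularity give equicontinuity of $\{K_l : l\in\mathcal U\}$, and the estimate should be made uniform over $\mathcal U$ using Assumption \ref{assum:stability}. By Arzel\`a--Ascoli, any sequence $l_n\to l$ in $C^2$ yields a subsequence with $K_{l_n}\to \tilde K$ uniformly. Passing to the limit in the integral formulation shows that $\tilde K$ solves \eqref{eq:xietat} for $l$. Uniqueness of that solution then gives $\tilde K=K_l$, so the whole sequence converges and $\mathcal K$ is continuous.

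I expect the hard point to be the uniform, $l$-independent derivative bounds needed in the second route, or equivalently the control of $\partial_t\delta K$ in the first. I would first try the compactness-plus-uniqueness route, since it needs only uniform bounds and uniqueness from \cite{izadi2015pde}.
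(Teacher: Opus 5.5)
Your Step~2 splitting is the paper's identity $K_{l_1}-K_{l_2}=\Lambda^0+\mathcal G_{l_1}(K_{l_1}-K_{l_2})$. Your route (a), summing $\Lambda^{n}=\mathcal G_{l_1}^{n}\Lambda^0$ with factorial majorants, is the paper's route. The genuine gap is in Step~1, and both routes rest on it.

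\textbf{The missing uniform bound.} The majorants of \cite{izadi2015pde}, and hence $M$ in \eqref{bound}, do \emph{not} depend only on bounds for $l,\dot l,\ddot l$. Because $\mathcal G_l$ in \eqref{g} contains $\partial_t$, the iterate $\bar K_{l,m}$ involves $\partial_t^{m-1}f$, i.e.\ $l^{(m+1)}$. Convergence of $\sum_m\bar K_{l,m}$ is therefore governed by the analyticity constant $\bar D$ of Assumption~\ref{assum:stability}, which bounds all derivatives of $l$. Compactness of $\mathcal U$ in the $C^2$ metric gives no uniform $\bar D$. For $n\ge2$, the functions $l_n(t)=1+n^{-3}\sin(nt)$ together with $l\equiv1$ form a $C^2$-compact set of positive entire functions. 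Yet $\dddot l_n=-\cos(nt)\not\to0$, and $|l_n^{(m+1)}|$ reaches $n^{m-2}$. Already $\bar K_{l_n,2}$ contains the term $\frac{l_n^4\dddot l_n}{128\alpha^3}\int_\eta^\xi\int_0^\eta(\rho^2-\sigma^2)\,d\sigma\,d\rho$, so $\bar K_{l_n,2}\not\to\bar K_{1,2}$, and the higher iterates grow with $n$.

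\textbf{Consequences for route (b).} You have neither a uniform $M$ nor the equicontinuity needed for Arzel\`a--Ascoli; note also that \eqref{bound} says nothing about $\partial_\xi K$ or $\partial_\eta K$. Route (b) has two further holes:
\begin{itemize}
\item Passing to the limit in $K=\bar K_{l,1}+\mathcal G_lK$ requires convergence of $\partial_tK_{l_n}$, which sits under the integral, and not just of $K_{l_n}$.
\item The uniqueness you invoke must hold in the class where the limit lives.
\end{itemize}
Also, \eqref{eq:xietat} carries no initial condition in $t$. $K_l(\cdot,\cdot,t)$ is built from the whole jet of $l$ at time $t$ by a Cauchy--Kovalevskaya-type series, so no parabolic-type compactness is available.

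\textbf{Route (a) and what is needed.} In route (a) the same issue reappears as a matching-norm problem. Each application of $\mathcal G_{l_1}$ costs one $t$-derivative, so you must estimate all $\partial_t^j\Lambda^n$ with Gevrey-1 weights. The double Volterra integral gains roughly $1/(n+1)^2$, which beats the factor of about $n+j$ lost to the derivative. But then $\Lambda^0$ is small only in an analytic norm of $l_1-l_2$, such as $\sup_j\|\partial_t^j(l_1-l_2)\|_\infty/(\bar D^{j+1}j!)$, not in $C^2$.

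The missing ingredient is a uniform analyticity constant $\bar D$ on $\mathcal U$, and it has to be assumed. Without it, $C^2$ closeness of $l$ does not even control $\bar K_{l,2}$. With it, Montel's theorem and Cauchy estimates upgrade $C^2$ (even $C^0$) convergence within $\mathcal U$ to convergence in such an analytic norm with a larger constant. The weighted Neumann series then closes.

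The paper's proof does not resolve this either. It tracks only $\Lambda^j$ and $\partial_t\Lambda^j$ in sup norm, and asserts the bound $M_4^N/N!$ for $\partial_t\Lambda^N$ without controlling $\partial_t^2\Lambda^{N-1}$. It also ends with $\|l_1-l_2\|_{C^3}$ rather than the $C^2$ metric of $\mathcal U$. You located the real obstacle correctly, but your Step~1 assumes it away.
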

\begin{proof}
Let us first review the results presented in \cite{izadi2015pde}. For the moving boundary  function $l_i,\;i=1,2$ satisfying Assumption \ref{assum:stability}, let $K_{l_i}$ denote the solution associated with the $l_i(t)$, governed by
\begin{equation*}
 \begin{cases}
\partial_t K_{l_i}(\xi, \eta, t)
= \dfrac{4\alpha}{l_i^2(t)}\partial_\xi\partial_\eta K_{l_i}(\xi, \eta, t)
+ \dfrac{\dot{l}_i(t)}{l_i(t)}
\left(\xi\partial_\xi K_{l_i}(\xi, \eta, t)
+ \eta\partial_\eta K_{l_i}(\xi, \eta, t)\right) \\
\hspace{3.5cm}
- \bar{\lambda}(\xi, \eta, t)K_{l_i}(\xi, \eta, t),\\[1mm]
K_{l_i}(\xi,\xi,t)=0,\\
K_{l_i}(\xi,0,t)=f(\xi,t).
\end{cases}
\end{equation*}
$K_{l_i}(\xi,\eta,t)$ can be expressed as 
\begin{equation*}
    K_{l_i}(\xi, \eta, t) = \sum_{n=1}^{\infty} \bar{K}_{l_i,n}(\xi, \eta, t),
\end{equation*}
where 
\begin{equation}\label{g}
\begin{aligned}
\bar{K}_{l_i,1}(\xi, \eta, t) &:= f_{l_i}(\xi, t) - f_{l_i}(\eta, t),\\
\bar{K}_{l_i,n+1}(\xi, \eta, t) &:= \mathcal{G}_{l_i}\bar{K}_{l_i,n}(\xi, \eta, t) \\
&= \int_{\eta}^{\xi} \int_{0}^{\eta} \frac{l_i^2(t)}{4\alpha} \partial_t \bar{K}_{l_i,n}(\rho, \sigma, t) +
 \left( \frac{\dot{l}_i(t) l_i(t)}{2\alpha} + \frac{l_i^2(t) \bar{\lambda}_{l_i}(\rho, \sigma, t)}{4\alpha} \right) \bar{K}_{l_i,n}(\rho, \sigma, t)  d\sigma d\rho \\
&\quad - \frac{\dot{l}_i(t) l_i(t)}{4\alpha} \left[ \xi \int_{0}^{\eta} \bar{K}_{l_i,n}(\xi, \sigma, t) \, d\sigma + \eta \int_{\eta}^{\xi} \bar{K}_{l_i,n}(\rho, \eta, t) \, d\rho \right].
\end{aligned}
\end{equation}
It can also be written as
\begin{equation*}
    K_{l_i}=\bar{K}_{l_i,1}+\mathcal{G}_{l_i}K_{l_i}.
\end{equation*}
A direct calculation yields
\begin{equation*}
    K_{l_1}-K_{l_2}=\bar{K}_{l_1,1}-\bar{K}_{l_2,1}+\big(\mathcal{G}_{l_1}-\mathcal{G}_{l_2}\big)K_{l_2}+\mathcal{G}_{l_1}\big(K_{l_1}-K_{l_2}\big).
\end{equation*}
By defining 
\begin{equation}\label{definition}
\begin{split}
      \Lambda^0 =\bar{K}_{l_1,1}-\bar{K}_{l_2,1}+\big(\mathcal{G}_{l_1}-\mathcal{G}_{l_2}\big)K_{l_2}, \enspace \Lambda^{n+1}=\mathcal{G}_{l_1}\Lambda^n,
\end{split}
\end{equation}
it follows that
\begin{equation}\label{est-13}
\begin{split}
    K_{l_1}-K_{l_2}&=\Lambda^0+\mathcal{G}_{l_1}\big(K_{l_1}-K_{l_2}\big)\\
    &=\Lambda^0+\mathcal{G}_{l_1}\left(\Lambda^0+\mathcal{G}_{l_1}\big(K_{l_1}-K_{l_2}\big)\right)\\
    &=\sum_{n=0}^{\infty} \Lambda^n.
\end{split}
\end{equation}
By the definition of $f$ in \eqref{lambdaf} and of $\mathcal{G}$ in \eqref{g}, we derive that for all
 $(\xi,\eta)\in \bar{S}$,
\begin{equation}\label{est-14}
\begin{split}
        \|\Lambda^0\|_{\infty} &\leq \underbrace{\|\bar{K}_{l_1,1}-\bar{K}_{l_2,1}\|_{\infty}}_{(\uppercase\expandafter{\romannumeral 1})} + \underbrace{\left\|\big(\mathcal{G}_{l_1}-\mathcal{G}_{l_2}\big)K_{l_2}\right\|_{\infty}}_{(\uppercase\expandafter{\romannumeral 2})}.
\end{split}
\end{equation}
Here, by Assumption \ref{assum:stability}, the terms $l_i$, $\dot{l}_i^2(t)$, and $\ddot{l}_i$ are bounded for $i=1,2$, which yields the following estimate:
\begin{equation}\label{est-16}
\begin{split}
  (\uppercase\expandafter{\romannumeral 1})=&\sup_{t>t_0}\Bigg| 
\frac{l_1(t)\dot{l}_1^2(t)-l_2(t)\dot{l}_2^2(t)}{16\alpha^2}(\xi-\eta)
+\frac{l_1^2(t)\ddot{l}_1(t)
-l_2^2(t)\ddot{l}_2(t)}{32\alpha^2}(\xi^2-\eta^2)\\
&-\frac{(l_1(t)-l_2(t))(\lambda_0+c)}{4\alpha}(\xi-\eta) \Bigg|\\
\leq& M_1\|l_1-l_2\|_{C^2[t_0,T]},
\end{split}
\end{equation}
where $M_1$ is a positive constant.
Moreover, by the boundedness of $K_{l_2}$ and $\partial_t K_{l_2}$ in \eqref{bound} and Assumption \ref{assum:stability}, we have 
\begin{equation}\label{est-17}
    \begin{split}
(\uppercase\expandafter{\romannumeral 2})= 
&\sup_{t>t_0}\bigg|
\left(
\frac{l_1^2(t)-l_2^2(t)}{4\alpha}
\right)
\int_\eta^\xi\int_0^\eta
\partial_t K_{l_2}(\rho,\sigma,t)\,d\sigma d\rho +
\int_\eta^\xi\int_0^\eta
\Bigg(
\frac{\dot l_1(t)l_1(t)-\dot l_2(t)l_2(t)}{2\alpha} \\
&
+
\frac{
l_1^2(t)\bar\lambda_{l_1}(\rho,\sigma,t)
-
l_2^2(t)\bar\lambda_{l_2}(\rho,\sigma,t)
}{4\alpha}
\Bigg)
K_{l_2}(\rho,\sigma,t)\,d\sigma d\rho \\
&-
\left(
\frac{\dot l_1(t)l_1(t)-\dot l_2(t)l_2(t)}{4\alpha}
\right)
\left(
\xi\int_0^\eta K_{l_2}(\xi,\sigma,t)\,d\sigma
+
\eta\int_\eta^\xi K_{l_2}(\rho,\eta,t)\,d\rho
\right)\bigg|\\
&\leq M_2 \|l_1-l_2\|_{C^2[t_0,T]}.
    \end{split}
\end{equation}
Together with \eqref{est-16} and \eqref{est-17}, we further obtain 
\begin{equation*}
    \|\Lambda^0\|_{\infty} \leq (M_1+M_2) \|l_1-l_2\|_{C^2[t_0,T]}.
\end{equation*}
Furthermore, using the same technique as in \eqref{est-14}-\eqref{est-17}, there exists a positive constant $M_3>0$ such that
\begin{equation*}
    \|\partial_t\Lambda^0\|_{\infty} \leq M_3 \|l_1-l_2\|_{C^3[t_0,T]}.
\end{equation*}
Then, we assume that the following inequality holds for $j=1,2,...,N-1$:
\begin{equation}\label{est-15}
   \max\{ \|\Lambda^j\|_{\infty}, \|\partial_t\Lambda^j\|_{\infty}\} \leq  \|l_1-l_2\|_{C^3[t_0,T]} \frac{M_4^j}{j!},
\end{equation}
where $M_4$ is a positive constant. 
We aim to prove that the above conclusion also holds when $j=N$.
By the definition of $\Lambda^{n}$ in \eqref{definition}, we arrive at
\begin{equation}\label{est-18}
\begin{aligned}
\|\Lambda^{N}\|_{\infty}
&=\|\mathcal{G}_{l_1}\Lambda^{N-1}\|_{\infty} \\
&\le 
\sup
\Bigg|
\int_{\eta}^{\xi}\int_{0}^{\eta}
\frac{l_1^2(t)}{4\alpha}\,
\partial_t\Lambda^{N-1}(\rho,\sigma,t) 
+\left(
\frac{\dot l_1(t)l_1(t)}{2\alpha}
+\frac{l_1^2(t)\bar\lambda_{l_1}(\rho,\sigma,t)}{4\alpha}
\right)
\Lambda^{N-1}(\rho,\sigma,t)
\,d\sigma d\rho \\
&\quad
-\frac{\dot l_1(t)l_1(t)}{4\alpha}
\left[
\xi\int_{0}^{\eta}\Lambda^{N-1}(\xi,\sigma,t)\,d\sigma
+\eta\int_{\eta}^{\xi}\Lambda^{N-1}(\rho,\eta,t)\,d\rho
\right]
\Bigg| \\
&\le
\|l_1-l_2\|_{C^2([t_0,T])}\frac{M_4^N}{N!}.
\end{aligned}
\end{equation}
Thus, by \eqref{est-13}-\eqref{est-18}, there exists a positive constant $M_5$ such that
\begin{equation*}
    \|K_{l_1}-K_{l_2}\|_{\infty} \leq M_5 \|l_1-l_2\|_{C^3[t_0,T]}.
\end{equation*}
This completes the proof. 
\end{proof}

Define the  operator
$\mathcal M:\mathcal{U}\to 
C(\bar S\times[t_0,T])
\times C([0,2]\times[t_0,T])
\times C(\bar S\times[t_0,T])$
by
\begin{equation}\label{m}
   \mathcal M(l)
:=
\big(
\mathcal{K}(l),\ \kappa_{0}(l),\ \kappa_{1}(l)
\big), 
\end{equation}
where
\begin{align*}
   \big(\kappa_{0}(l)\big)(\xi,t)
&:= 
\frac{4\alpha}{l(t)}\partial_\xi \bigg( \big(\mathcal{K}(l)\big)(\xi,0,t)
-
f(\xi,t)\bigg),\\
\big(\kappa_{1}(l)\big)(\xi,\eta,t)
&:=
\bigg(\partial_t
-
\frac{4\alpha}{l^2(t)}
\partial_{\xi\eta}
-
\frac{\dot l(t)}{l(t)}
\left(
\xi\partial_\xi 
+
\eta\partial_\eta 
\right)
+
\bar\lambda(\xi,\eta,t)\bigg)\big(\mathcal{K}(l)\big)(\xi,\eta,t). 
\end{align*}
In particular, $\kappa_{0}(l)$ and $\kappa_{1}(l)$ are directly derived from the kernel equation \eqref{eq:xietat} to ensure the continuity of the operator $\mathcal M$.

Here, we list the following theorem, which is used to explain that there exists a neural operator that can approximate the operator $\mathcal M$.
\begin{theorem} \label{thm:deeponet_uat}
   For any $\varepsilon > 0$, there exists a neural operator $\hat{\mathcal{M}}(l)=\big(
        \hat{\mathcal{K}}(l),
        \hat{\kappa}_0(l),
        \hat{\kappa}_1(l)
    \big)$ satisfies, for all $(\xi, \eta, t) \in \bar S\times[t_0,T]$,
    \begin{equation} \label{eq:operator_bound}
        \big| \mathcal{M}(l)(\xi, \eta, t) - \hat{\mathcal{M}}(l)(\xi, \eta, t) \big| < \varepsilon,
    \end{equation}
    where $\hat{\mathcal{K}}(l)= \hat{K}$, 
    $\hat{\kappa}_0(l):=
    \frac{4\alpha}{l(t)}
    \partial_\xi \big(\big(\hat{\mathcal{K}}(l)\big)(\xi,0,t)
    -f(\xi,t)\big)$, and $\hat{\kappa}_1(l):=
   \big( \partial_t
    -
    \frac{4\alpha}{l^2(t)}
    \partial_{\xi\eta}
    -
    \frac{\dot l(t)}{l(t)}
    \big(
        \xi\partial_\xi 
        +
        \eta\partial_\eta
    \big)
    +
    \bar\lambda_l\big)\big(\hat{\mathcal{K}}(l)\big)(\xi,\eta,t)$.
    Equivalently, for all $(\xi, \eta, t) \in \bar S\times[t_0,T]$,
    \begin{equation} \label{eq:expanded_bound}
        \begin{aligned}
            &\big|\big(\mathcal{K}(l) - \hat{\mathcal{K}}(l)\big)(\xi, \eta, t) \big| + {\big|\frac{4\alpha}{l(t)}\partial_\xi\big(\mathcal{K}(l)-\hat{\mathcal{K}}(l)\big)(\xi,0,t)\big|} \\
           +  &\bigg| \Big( \partial_t - \frac{4\alpha}{l^2(t)}\partial_\xi\partial_\eta - \frac{\dot{l}(t)}{l(t)}(\xi\partial_\xi + \eta\partial_\eta)+\bar{\lambda}(\xi,\eta,t) \Big) \Big( \mathcal{K}(l) - \hat{\mathcal{K}}(l) \Big)(\xi, \eta, t) \bigg| < \varepsilon.
        \end{aligned}
    \end{equation}
\end{theorem}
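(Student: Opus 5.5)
The plan is to deduce Theorem~\ref{thm:deeponet_uat} from the universal approximation theorem for operators \cite{deng2022approximation} applied to the operator $\mathcal M$ in \eqref{m}. That theorem needs two things: a compact input set and a continuous operator into a space of continuous functions on a compact domain. The input set $\mathcal U$ is compact in the $C^2([t_0,T])$ metric by assumption. The output domains $\bar S\times[t_0,T]$ and $[0,2]\times[t_0,T]$ are compact. So the core task is to show that $\mathcal M:\mathcal U\to C(\bar S\times[t_0,T])\times C([0,2]\times[t_0,T])\times C(\bar S\times[t_0,T])$ is continuous. The universal approximation theorem then gives, for each $\varepsilon>0$, a DeepONet $\hat{\mathcal M}$ with $\sup_{l\in\mathcal U}\|\mathcal M(l)-\hat{\mathcal M}(l)\|_\infty<\varepsilon$. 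This is \eqref{eq:operator_bound}, and \eqref{eq:expanded_bound} is just its componentwise reading.

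\textbf{Continuity of each component.} The first component, $\mathcal K$, is continuous by Lemma~\ref{lem:continue}. For the third component, I will use that $K=\mathcal K(l)$ solves \eqref{eq:xietat} exactly, so $\kappa_1(l)\equiv 0$ identically on $\bar S\times[t_0,T]$; this map is trivially continuous. For the second component, I will exploit the structure of the successive-approximation series. Every term $\bar K_{l,n+1}=\mathcal G_l\bar K_{l,n}$ in \eqref{g} contains the factor $\int_0^\eta$, either explicitly or through the multiplier $\eta$, so it vanishes at $\eta=0$. Hence $K(\xi,0,t)=\bar K_{l,1}(\xi,0,t)=f(\xi,t)-f(0,t)=f(\xi,t)$, and therefore $\kappa_0(l)\equiv 0$ as well. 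So $\mathcal M(l)=(\mathcal K(l),0,0)$, and its continuity reduces entirely to Lemma~\ref{lem:continue}.

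\textbf{Identifying the approximant and the main obstacle.} The hard step is that the statement defines $\hat\kappa_0,\hat\kappa_1$ as derivatives of $\hat{\mathcal K}(l)$, rather than as independent outputs of the network. Uniform closeness of $\hat{\mathcal K}$ to $\mathcal K$ in $C^0$ does not by itself control $\partial_\xi$, $\partial_{\xi\eta}$ or $\partial_t$ of the difference. I would try the following. First, upgrade Lemma~\ref{lem:continue} to continuity of $\mathcal K$ into $C^2(\bar S\times[t_0,T])$; here $C^2$ is understood to include $\partial_t$, so that it covers every derivative entering $\kappa_0,\kappa_1$. This repeats the $\Lambda^n$ induction in \eqref{est-15}--\eqref{est-18}, now applied to $\partial_\xi,\partial_\eta,\partial_{\xi\eta}$ of $\Lambda^n$; these derivatives are obtained by differentiating the integral representation of $\mathcal G_{l_1}$, and they satisfy factorial bounds of the same kind. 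Second, invoke a version of the universal approximation theorem with smooth (e.g.\ $\tanh$) activations that approximates simultaneously in $C^2$; such versions follow from density of smooth networks in $C^k$ on compacta, as in \cite{deng2022approximation}. The resulting bound on $\hat{\mathcal K}-\mathcal K$ and its derivatives up to second order then propagates through the linear differential operators defining $\hat\kappa_0,\hat\kappa_1$. The coefficients $4\alpha/l$, $4\alpha/l^2$, $\dot l/l$ and $\bar\lambda$ are uniformly bounded over the compact set $\mathcal U$, because its functions are strictly positive and compactness gives $\min l>0$. Rescaling $\varepsilon$ by these bounds yields \eqref{eq:expanded_bound}.

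If this $C^2$ upgrade proves too delicate, the fallback is to treat $\hat{\mathcal M}$ as three independently trained output heads approximating $(\mathcal K(l),0,0)$, which is exactly the setting where the plain $C^0$ universal approximation theorem applies via Lemma~\ref{lem:continue}.
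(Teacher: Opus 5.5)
Your skeleton is the paper's. The paper also gets continuity of $\mathcal M$ from Lemma~\ref{lem:continue} together with \eqref{eq:xietat} and \eqref{m}. That is exactly your observation $\mathcal M(l)=(\mathcal K(l),0,0)$; for $\kappa_0$ you do not need the series, since it is just the boundary condition $K(\xi,0,t)=f(\xi,t)$. The paper then applies the sup-norm universal approximation theorem of \cite{lu2021learning}.

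You are right that this does not yield \eqref{eq:expanded_bound}. That is the gap in your proposal, and the paper's one-line proof does not address it either. Your fallback is precisely the paper's argument, and it proves nothing about $\hat{\mathcal K}$. Because the second and third targets vanish identically, heads that output $0$ already satisfy \eqref{eq:operator_bound}. But \eqref{eq:expanded_bound}, and its use in Theorem~\ref{thm:approx_stability} through the bounds \eqref{delta} on $\delta_0,\delta_1$, concern derivatives of the first output: $\frac{d}{dx}\hat k(x,x,t)$, $\hat k_t$, and $\hat k_{xx}-\hat k_{yy}$.

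So everything rests on your main route, and its decisive step is not justified. Continuity of $\mathcal K$ into $C^2(\bar S\times[t_0,T])$, including $\partial_t$, does not follow by rerunning \eqref{est-15}--\eqref{est-18} on $\partial_\xi,\partial_\eta,\partial_{\xi\eta}$, for two reasons.
\begin{itemize}
\item The operator $\mathcal G_l$ in \eqref{g} carries $\partial_t$ under the integral. So $\partial_t\Lambda^N$ requires $\partial_t^2\Lambda^{N-1}$. Also $\partial_{\xi\eta}\Lambda^N$ contains $\frac{l_1^2}{4\alpha}\partial_t\Lambda^{N-1}(\xi,\eta,t)$ pointwise, with no integration left. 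An induction on finitely many $t$-derivatives therefore never closes.
\item $\partial_t^j\Lambda^0$ contains $l_1^{(j+2)}-l_2^{(j+2)}$. Hence $\sum_n\Lambda^n$ depends on every derivative of $l_1-l_2$, which the $C^2$ metric on $\mathcal U$ does not control unless $\mathcal U$ carries uniform higher-order bounds.
\end{itemize}
The induction you propose to copy already shows this defect: \eqref{est-18} bounds $\Lambda^N$ but never $\partial_t\Lambda^N$, and the final estimate needs $\|l_1-l_2\|_{C^3}$.

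A workable repair has three steps.
\begin{itemize}
\item Impose Assumption~\ref{assum:stability} on $\mathcal U$ with a single common $\bar D$.
\item Use Gevrey-weighted bounds on all $\partial_t^j\bar K_{l,n}$, as in the convergence proof of the series in \cite{izadi2015pde}. These give $\sup_{l\in\mathcal U}\|\mathcal K(l)\|_{C^3(\bar S\times[t_0,T])}<\infty$.
\item Apply Arzel\`a--Ascoli together with the $C^0$ continuity of Lemma~\ref{lem:continue}. This upgrades the result to continuity of $\mathcal K$ into $C^2$.
\end{itemize}

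Finally, the $C^2$ version of the operator approximation theorem has to be proved rather than cited, because the cited result is a sup-norm statement. One way is the following.
\begin{itemize}
\item Cover the $C^2$-compact set $\mathcal K(\mathcal U)$ by an $\varepsilon$-net.
\item Approximate a subordinate partition of unity on $\mathcal U$ by branch nets.
\item Approximate the net elements in $C^2$ by $\tanh$ trunk nets.
\item Pass the error through the coefficients $4\alpha/l$, $4\alpha/l^2$, $\dot l/l$, $\bar\lambda$, which are uniformly bounded on $\mathcal U$ as you note.
\end{itemize}
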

\begin{proof}
According to Lemma \ref{lem:continue}, together with \eqref{eq:xietat} and \eqref{m}, we can prove the continuity of the operator $\mathcal{M}$. Direct application of Universal Approximation Theorem in \cite{lu2021learning} leads to the proof.
\end{proof}

\subsection{Stability analysis}
\label{subsec:stability}
 Define
\begin{equation}
\label{eq:approx_transform}
\hat w(x,t)=v(x,t)-\int_0^x \hat k(x,y,t)v(y,t)\,dy, 
\end{equation}
where 
\begin{equation}\label{hatkdefinition}
    \hat k\left(
\frac{l(t)}{2}(\xi+\eta),
\frac{l(t)}{2}(\xi-\eta),
t
\right):=\hat K(\xi,\eta,t).
\end{equation}
The corresponding inverse backstepping transformation is defined as follows 
\begin{equation}
    v(x,t) = \hat{w}(x,t) + \int_0^x \hat{q}(x,y,t)\hat{w}(y,t) \, dy. \label{eq:inverse_trans}
\end{equation}
Similar to \eqref{eq:control_original}, we define the approximate boundary feedback law by
\begin{equation}
\label{eq:approx_control_original}
\hat{U}(t)=
\int_0^{l(t)}
\left[
\frac{\dot l(t)}{2\alpha}\hat k(l(t),y,t)+\hat k_x(l(t),y,t)
\right]v(y,t)\,dy
+\hat k(l(t),l(t),t)v(l(t),t).
\end{equation}
Under the control law \eqref{eq:approx_control_original}, the target system \eqref{eq:target_exact} can be written as
\begin{equation}
\begin{cases}
\label{eq:perturbed_target}
\hat w_t(x,t)=\alpha \hat w_{xx}(x,t)-c\hat w(x,t)+\delta_0(x,t)v(x,t)
- \int_0^x \delta_1(x,y,t)v(y,t)\,dy,\\[1mm]
\hat w(0,t)=0, \\[1mm]
\hat w_x(l(t),t)= -\frac{\dot l(t)}{2\alpha}\hat w(l(t),t),
\end{cases}
\end{equation}
where 
\begin{align*}
\delta_0(x,t):&=
\lambda(x,t)+c+2\alpha \frac{d}{dx}\hat k(x,x,t),\\
\delta_1(x,y,t) :&= \hat k_t(x,y,t) - \alpha\big(\hat k_{xx}(x,y,t) - \hat k_{yy}(x,y,t)\big) + (\lambda(y,t)+c)\hat k(x,y,t).
\end{align*}
{
Then, using \eqref{eq:kernel_pde_original}, we obtain
\begin{align}
    \delta_0(x,t)
    &=-2\alpha \frac{d}{dx}(k-\hat{k})(x,x,t), 
    \label{eq:delta0_def}\\
    \delta_1(x,y,t) 
    &= -\Big[
\partial_t
-\alpha(\partial_x^2-\partial_y^2)
+\lambda(y,t)+c
\Big](k-\hat{k})(x,y,t).
    \label{eq:delta1_def}
\end{align}}
Before proving the stability of the original system \eqref{eq:plant_v1} under the control \eqref{eq:approx_control_original}, we first establish the following lemma to illustrate the boundedness of the kernel $\hat{k}$ and $\hat{q}$.

\begin{lemma}\label{lem:kernel_bounds}
Consider the kernel $\hat{k}(x,y,t)$ defined in \eqref{hatkdefinition}  and $\hat{q}(x,y,t)$ defined in \eqref{eq:inverse_trans} over a time-varying spatial domain $S(t)$. Let Assumption \ref{assum:stability} hold. Then,  there exist positive constants $\tau_K$ and $\tau_Q$ such that
\begin{align}
    \sup_{t\in[t_0,T]} \|\hat{k}(\cdot,\cdot,t)\|_{L^\infty(S(t))} &\le \tau_K, \label{eq:k_bound} \\
    \sup_{t\in[t_0,T]} \|\hat{q}(\cdot,\cdot,t)\|_{L^\infty(S(t))} &\le \tau_Q. \label{eq:l_bound}
\end{align}
\end{lemma}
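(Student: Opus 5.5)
The plan is to obtain \eqref{eq:k_bound} from the uniform bound \eqref{bound} on the exact kernel together with Theorem~\ref{thm:deeponet_uat}. Then we derive \eqref{eq:l_bound} by solving a Volterra equation of the second kind for $\hat q$ with a Gronwall/successive-approximation argument.

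\emph{Step 1 (bound on $\hat k$).} By \eqref{kdefinition} and \eqref{hatkdefinition}, the map $(x,y)\mapsto(\xi,\eta)$ in \eqref{eq:transformation} is a bijection from $S(t)$ onto $\bar S$ for each $t$. Hence
\[
\|\hat k(\cdot,\cdot,t)\|_{L^\infty(S(t))}=\|\hat K(\cdot,\cdot,t)\|_{L^\infty(\bar S)}.
\]
By \eqref{eq:expanded_bound}, for a fixed $\varepsilon>0$ we have $|K-\hat K|<\varepsilon$ on $\bar S\times[t_0,T]$. Combining this with \eqref{bound} gives
\[
\sup_{t\in[t_0,T]}\|\hat K(\cdot,\cdot,t)\|_{L^\infty(\bar S)}\le M+\varepsilon=:\tau_K .
\]
This step is routine; note that $\tau_K$ depends on $T$ through $M$.

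\emph{Step 2 (Volterra equation for $\hat q$).} Substitute \eqref{eq:inverse_trans} into \eqref{eq:approx_transform}, or equivalently compose the two transformations and require the identity. Using Fubini on the triangle, this yields for each fixed $t$
\[
\hat q(x,y,t)=\hat k(x,y,t)+\int_y^x \hat k(x,s,t)\,\hat q(s,y,t)\,ds,\qquad 0\le y\le x\le l(t).
\]
Time enters only as a parameter, so the kernel equation is a Volterra equation in $x$ with continuous data on $S(t)$. I would solve it by successive approximations: set $\hat q^0=\hat k$ and $\hat q^{n+1}=\int_y^x \hat k(x,s,t)\hat q^n(s,y,t)\,ds$. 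Induction gives
\[
|\hat q^n(x,y,t)|\le \tau_K^{n+1}\frac{(x-y)^n}{n!}.
\]
The series therefore converges uniformly and gives the unique continuous solution, with
\[
|\hat q(x,y,t)|\le \tau_K e^{\tau_K (x-y)}\le \tau_K e^{\tau_K l(t)}.
\]

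\emph{Step 3 (uniformity in $t$).} The remaining point, which I expect to be the only real obstacle, is making this bound independent of $t$. For that we need $\sup_{t\in[t_0,T]} l(t)<\infty$. This follows from Assumption~\ref{assum:stability} with $j=0$, which gives $|l(t)|\le \bar D$. So we can take $\tau_Q:=\tau_K e^{\tau_K\bar D}$.

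One also needs $\hat k(\cdot,\cdot,t)$ to be measurable and bounded, rather than merely pointwise defined, for the Volterra theory to apply. Continuity of the neural operator output $\hat K$ gives this, and it transfers to $\hat k$ because the transformation \eqref{eq:transformation} is a homeomorphism. This completes the plan.
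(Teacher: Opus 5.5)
Your proposal is correct and follows essentially the same route as the paper: you take $\tau_K:=M+\varepsilon$ from \eqref{bound} and Theorem~\ref{thm:deeponet_uat}, derive a Volterra (resolvent) equation giving $|\hat q(x,y,t)|\le \tau_K e^{\tau_K(x-y)}$, and make this uniform in $t$ via a bound on $l(t)$. The differences are cosmetic: you use the equally valid identity $\hat q=\hat k+\int_y^x \hat k(x,s,t)\hat q(s,y,t)\,ds$ with successive approximations, which also gives existence of $\hat q$, and you bound $l(t)$ by $\bar D$ from Assumption~\ref{assum:stability}, whereas the paper uses $\hat q=\hat k+\int_y^x \hat q(x,\sigma,t)\hat k(\sigma,y,t)\,d\sigma$ with Gronwall and simply invokes $l^*$.
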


\begin{proof}
Using \eqref{bound} and Theorem \ref{thm:deeponet_uat}, it follows that
\begin{equation*}
    \|\hat{k}(\cdot,\cdot,t)\|_{L^\infty(S(t))} \le \|k(\cdot,\cdot,t)\|_{L^\infty(S(t))} + \|\hat{k}(\cdot,\cdot,t) - k(\cdot,\cdot,t)\|_{L^\infty(S(t))} < M + \varepsilon.
\end{equation*}
Defining $\tau_K := M + \varepsilon$, we directly obtain \eqref{eq:k_bound}. Following \cite[Section 4.5]{krstic2008boundary}, we derive that
\begin{equation}
    \hat{q}(x,y,t) = \hat{k}(x,y,t) + \int_y^x \hat{q}(x,\sigma,t) \hat{k}(\sigma,y,t) \, d\sigma, \quad \forall\;(x,y,t)\in S(t)\times[t_0,T].
    \label{eq:resolvent_eq}
\end{equation}
Then, we obtain:
\begin{equation}\label{est-3}
    |\hat{q}(x,y,t)| \le |\hat{k}(x,y,t)| + \int_y^x |\hat{q}(x,\sigma,t)| |\hat{k}(\sigma,y,t)| \, d\sigma.
\end{equation}
Substituting \eqref{eq:k_bound} into \eqref{est-3} yields:
\begin{equation}\label{est-4}
    |\hat{q}(x,y,t)| \le \tau_K + \tau_K \int_y^x |\hat{q}(x,\sigma,t)| \, d\sigma.
\end{equation}
Applying Gronwall's inequality to \eqref{est-4}, we obtain the following explicit bound for the inverse kernel:
\begin{equation*}
    |\hat{q}(x,y,t)| \le \tau_K \exp\left( \int_y^x \tau_K \, d\sigma \right) = \tau_K e^{\tau_K (x-y)}.
\end{equation*}
Since $x,y\in[0,l(t)]$ and the moving boundary satisfies
$\sup_{t\ge t_0} l(t)\le l^*$, it follows that
$x-y\le l(t)\le l^*$. Therefore, we obtain \eqref{eq:l_bound} with $\tau_Q=\tau_K e^{\tau_K l^*}$. The proof is complete.
\end{proof}
Then, we present the main theorem to obtain the exponential stability of the system \eqref{eq:plant_v1} under the control \eqref{eq:approx_control_original}, which is listed as follows.
\begin{theorem}
\label{thm:approx_stability} Consider the closed-loop system composed of plant \eqref{eq:plant_v1} and  control \eqref{eq:approx_control_original}. There exists a sufficiently small $\varepsilon^*$, if the approximate kernel $\hat{k}$ satisfies \eqref{eq:expanded_bound} with $\varepsilon\in(0,\varepsilon^*)$, 
 then the closed-loop system satisfies that for any given $T>t_0$, 
\begin{equation*}
     \|v(\cdot, t)\|_{L^2(0, l(t))} \le C_I C_D e^{-\frac{\mu_0-C_0\varepsilon}{2} (t-t_0)} \|v(\cdot, t_0)\|_{L^2(0, l(t_0))}, \quad \forall t \in [t_0,T],
\end{equation*} 
where $ C_I:= 1+\frac{\tau_Q l^*}{\sqrt2}$, $C_D:= 1+\frac{\tau_K l^*}{\sqrt2} $, $\mu_0 := \frac{4\alpha}{(l^*)^2} +2 c$, and $C_0:=2C_I(1+l^*)$.
\end{theorem}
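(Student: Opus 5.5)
We will run a Lyapunov argument on the perturbed target system \eqref{eq:perturbed_target} and then transfer the bound to $v$ through the inverse transformation \eqref{eq:inverse_trans}. Take $V(t)=\frac12\int_0^{l(t)}\hat w^2(x,t)\,dx$. When we differentiate, the moving boundary contributes $\frac{\dot l(t)}{2}\hat w^2(l(t),t)$ (Leibniz rule). Integrating the diffusion term by parts and using $\hat w(0,t)=0$ and $\hat w_x(l(t),t)=-\frac{\dot l}{2\alpha}\hat w(l(t),t)$ gives $\alpha\hat w\hat w_x|_{x=l(t)}=-\frac{\dot l}{2}\hat w^2(l(t),t)$. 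This cancels the Leibniz term exactly, which is precisely why the target system carries that boundary condition. We are left with
\[
\dot V=-\alpha\|\hat w_x\|^2-2cV+\int_0^{l(t)}\hat w\Big(\delta_0 v-\int_0^x\delta_1(x,y,t)v(y,t)\,dy\Big)dx .
\]
Poincaré's inequality for functions vanishing at $x=0$ on an interval of length $l(t)\le l^*$ gives $\alpha\|\hat w_x\|^2\ge \frac{\alpha\pi^2}{4(l^*)^2}\|\hat w\|^2\ge \frac{2\alpha}{(l^*)^2}\|\hat w\|^2$. Hence the nominal part is bounded by $-\mu_0 V$ with $\mu_0=\frac{4\alpha}{(l^*)^2}+2c$.

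Next we bound the perturbation terms. By \eqref{eq:delta0_def}–\eqref{eq:delta1_def}, $\delta_0$ and $\delta_1$ are the physical-coordinate versions of the second and third quantities in \eqref{eq:expanded_bound}, so we convert them using the chain rule through \eqref{kdefinition}.
\begin{itemize}
\item On the diagonal $x=y$ we have $\eta=0$ and $\xi=2x/l(t)$. Differentiating $k(x,x,t)$ therefore involves $\partial_\xi K(\xi,0,t)$ with factor $2/l(t)$, so $|\delta_0|\le\frac{4\alpha}{l}|\partial_\xi(K-\hat K)(\xi,0,t)|<\varepsilon$.
\item In the interior, the operator $\partial_t-\alpha(\partial_x^2-\partial_y^2)+\lambda+c$ transforms exactly into $\partial_t-\frac{4\alpha}{l^2}\partial_{\xi\eta}-\frac{\dot l}{l}(\xi\partial_\xi+\eta\partial_\eta)+\bar\lambda$. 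This is the same computation that produced \eqref{eq:xietat} from \eqref{eq:kernel_pde_original}, and it also accounts for the time dependence of the change of variables. Thus $|\delta_1|<\varepsilon$.
\end{itemize}
Cauchy–Schwarz then gives
\[
\Big|\int\hat w(\delta_0v-\textstyle\int\delta_1 v)\Big|\le \varepsilon(1+l^*)\|\hat w\|\,\|v\| .
\]
Using \eqref{eq:inverse_trans} and Lemma \ref{lem:kernel_bounds}, $\|v\|\le(1+\tau_Q l^*/\sqrt2)\|\hat w\|=C_I\|\hat w\|$, since the Volterra part is bounded by $\tau_Q\,l^*/\sqrt2$ via Cauchy–Schwarz on the triangle. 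The perturbation is therefore at most $\varepsilon C_I(1+l^*)\|\hat w\|^2=C_0\varepsilon V$.

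Combining the two estimates gives $\dot V\le-(\mu_0-C_0\varepsilon)V$. We choose $\varepsilon^*<\mu_0/C_0$, so the rate is positive. Gronwall on $[t_0,T]$ yields $\|\hat w(t)\|\le e^{-\frac{\mu_0-C_0\varepsilon}{2}(t-t_0)}\|\hat w(t_0)\|$. Finally, $\|v(t)\|\le C_I\|\hat w(t)\|$, and from \eqref{eq:approx_transform} with \eqref{eq:k_bound}, $\|\hat w(t_0)\|\le C_D\|v(t_0)\|$, which is the claimed estimate.

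The main obstacle is the rigorous translation of the $(\xi,\eta)$-coordinate bounds \eqref{eq:expanded_bound} into sup bounds on $\delta_0$ and $\delta_1$. This requires verifying that the transformed differential operator matches exactly, time-derivative and dilation terms included. It also requires enough regularity of $\hat K$ ($C^2$ in space, $C^1$ in time) for the well-posedness of \eqref{eq:perturbed_target} and for differentiating $V$. To handle this, we would redo the chain-rule computation $\partial_t|_{x,y}=\partial_t|_{\xi,\eta}-\frac{\dot l}{l}(\xi\partial_\xi+\eta\partial_\eta)$ explicitly, and we would assume that the neural operator outputs are smooth, as is implicit in Theorem \ref{thm:deeponet_uat}.
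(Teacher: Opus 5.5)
Your proposal is correct and follows the paper's proof essentially step for step. It uses the same Lyapunov functional $V$, the same chain-rule conversion of \eqref{eq:expanded_bound} into $|\delta_0|,|\delta_1|<\varepsilon$, the same Cauchy--Schwarz perturbation bound $C_0\varepsilon V$, and the same transfer back through $C_I$ and $C_D$. You additionally make the boundary-term cancellation explicit, and you use the sharp Poincar\'e constant $\pi^2/4\ge 2$ where the paper uses the cruder constant $(l^*)^2/2$ directly.

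One small point: $C_0$ depends on $\varepsilon$ through $\tau_K=M+\varepsilon$ and $\tau_Q=\tau_K e^{\tau_K l^*}$, so ``choose $\varepsilon^*<\mu_0/C_0$'' is circular as written. The paper instead writes $\mu_0-C_0\varepsilon$ explicitly as a continuous function of $\varepsilon$ that equals $\mu_0>0$ at $\varepsilon=0$; equivalently, you can bound $C_0$ uniformly for $\varepsilon\le 1$ and take $\varepsilon^*$ accordingly.
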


\begin{proof}
From \eqref{eq:inverse_trans}, we have
\[
v=\hat w+\hat{\mathcal Q}_t \hat w, \quad 
\mbox{where} \quad
(\hat{\mathcal Q}_t \hat w)(x):=\int_0^x \hat q(x,y,t)\hat w(y,t)\,dy.
\]
For each $x\in[0,l(t)]$, utilizing \eqref{eq:l_bound} and applying the H\"{o}lder's inequality yields
\begin{equation}\label{est-5}
  \left| \int_0^x \hat q(x,y,t)\hat w(y,t)\,dy \right|
\le \tau_Q \int_0^x |\hat w(y,t)|\,dy
\le \tau_Q\sqrt{x}\,\|\hat w(\cdot,t)\|_{L^2(0,l(t))}.
\end{equation}
Squaring \eqref{est-5} and integrating with respect to $x$ over $(0,l(t))$ gives
\begin{equation*}
    \|\hat{\mathcal Q}_t\hat w\|_{L^2(0,l(t))}^2
\le \tau_Q^2 \|\hat w(\cdot,t)\|_{L^2(0,l(t))}^2 \int_0^{l(t)} x\,dx
= \frac{\tau_Q^2 l^2(t)}{2}\|\hat w(\cdot,t)\|_{L^2(0,l(t))}^2.
\end{equation*}
Taking the square root and bounding $l(t)$ by $l^*$, we obtain $\|\hat{\mathcal Q}_t\hat w\|_{L^2(0,l(t))} \le \frac{\tau_Q l^*}{\sqrt2}\|\hat w\|_{L^2(0,l(t))}$. Then, we have
\begin{equation}
\label{eq:inverse_norm_est}
\begin{split}
    \|v(\cdot,t)\|_{L^2(0,l(t))} &\le \|\hat w\|_{L^2(0,l(t))} + \|\hat{\mathcal Q}_t\hat w\|_{L^2(0,l(t))} \\
&\le C_I \|\hat w(\cdot,t)\|_{L^2(0,l(t))},
\end{split}
\end{equation}
where $C_I := 1+\frac{\tau_Q l^*}{\sqrt2}$.
Similarly, rewriting \eqref{eq:approx_transform} as $\hat w=v-\hat{\mathcal K}_t v$, with 
\begin{equation*}
    (\hat{\mathcal K}_t v)(x)=\int_0^x \hat k(x,y,t)v(y,t)\,dy.
\end{equation*}
We have $\|\hat w\|_{L^2(0,l(t))} \le \|v\|_{L^2(0,l(t))} + \|\hat{\mathcal K}_t v\|_{L^2(0,l(t))}$. Using \eqref{eq:k_bound} and Cauchy--Schwarz inequality yields
\begin{equation*}
    \left| \int_0^x \hat k(x,y,t)v(y,t)\,dy \right|
\le \tau_K\sqrt{x}\,\|v(\cdot,t)\|_{L^2(0,l(t))}.
\end{equation*}
Squaring and integrating over $(0,l(t))$ yields $\|\hat{\mathcal K}_t v\|_{L^2(0,l(t))} \le \frac{\tau_K l^*}{\sqrt2}\|v\|_{L^2(0,l(t))}$. Thus, we obtain 
\begin{equation}
\label{eq:direct_norm_est}
\|\hat w(\cdot,t)\|_{L^2(0,l(t))} \le C_D \|v(\cdot,t)\|_{L^2(0,l(t))}, \quad \text{where} \quad C_D := 1+\frac{\tau_K l^*}{\sqrt2}.
\end{equation}

Define the Lyapunov functional 
\begin{equation}\label{lyapunov}
    V(t):=\frac12\int_0^{l(t)} \hat w^2(x,t)\,dx. 
\end{equation}
Differentiating \eqref{lyapunov} along \eqref{eq:perturbed_target} with respect to $t$ yields
\begin{equation}
\label{eq:Vdot_main}
\dot V = -\alpha\int_0^{l(t)} \hat w_x^2(x,t)\,dx - c\int_0^{l(t)} \hat w^2(x,t)\,dx + \Delta_0(t) + \Delta_1(t),
\end{equation}
where 
\begin{equation}
\label{delta0}
\Delta_0(t):=\int_0^{l(t)} \hat w(x,t)\delta_0(x,t)v(x,t)\,dx,
\end{equation}
and 
\begin{equation}
\label{delta1}
\Delta_1(t):=\int_0^{l(t)} \hat w(x,t) \left(\int_0^x -\delta_1(x,y,t)v(y,t)\,dy\right)dx. 
\end{equation}
Subsequently, we need to estimate $\delta_0$ and $\delta_1$ defined in \eqref{eq:delta0_def} and \eqref{eq:delta1_def}, respectively. For the second term on the left of inequality \eqref{eq:expanded_bound}, by \eqref{kdefinition} and \eqref{hatkdefinition}, it yields
\begin{equation}\label{est-10}
      \frac{4\alpha}{l(t)}
    \partial_\xi (K-\hat{K})(\xi,0,t)
    =
    2\alpha
  \frac{d}{dx}
    (k-\hat{k})(x,x,t).  
\end{equation}
Then, by \eqref{kdefinition} and \eqref{hatkdefinition},  we have
\begin{equation}\label{est-8}
     \left(
    \partial_t
    -
    \frac{4\alpha}{l^2(t)}\partial_{\xi\eta}
    -
    \frac{\dot l(t)}{l(t)}
    \left(
        \xi\partial_\xi+\eta\partial_\eta
    \right)
\right) (K-\hat{K})(\xi,\eta,t)   
    =
    \partial_t (k-\hat{k})(x,y,t).
\end{equation}
Moreover, from the definition of $\bar\lambda$ in \eqref{lambdaf},
\begin{equation}\label{est-9}
      \bar\lambda(\xi,\eta,t)
    =
    \lambda\left(\frac{l(t)}{2}(\xi-\eta),t\right)+c
    =
    \lambda(y,t)+c.  
\end{equation}
Therefore, for the third term of the left of inequality \eqref{eq:expanded_bound}, by \eqref{est-8} and \eqref{est-9}, it gives
\begin{align}\label{est-11}
&\left(
    \partial_t
    -
    \frac{4\alpha}{l^2(t)}\partial_{\xi\eta}
    -
    \frac{\dot l(t)}{l(t)}
    \left(
        \xi\partial_\xi+\eta\partial_\eta
    \right)
    +
    \bar\lambda(\xi,\eta,t)
\right)
(K-\hat{K})(\xi,\eta,t) \\
 =&
\left(
    \partial_t
    -
    \alpha(\partial_x^2-\partial_y^2)
    +
    \lambda(y,t)+c
\right)
(k-\hat{k})(x,y,t).
\end{align}
By \eqref{eq:expanded_bound}, \eqref{est-10} and \eqref{est-11}, we obtain
\begin{equation}\label{equi}
\big|
2\alpha \frac{d}{dx}
(k-\hat{k})(x,x,t)
\big|+
\bigg|
\Big(
\partial_t-\alpha(\partial_x^2-\partial_y^2)
+\lambda(y,t)+c
\Big)
(k-\hat{k})(x,y,t)
\bigg|
<\varepsilon,
\end{equation}
which implies
\begin{equation}\label{delta}
    |\delta_0(x,t)|<\varepsilon,
\qquad
|\delta_1(x,y,t)|<\varepsilon
\end{equation}
for \(0\le y\le x\le l(t)\) and \(t\in[t_0,T]\).
By the H\"{o}lder's inequality, \eqref{eq:inverse_norm_est}, \eqref{delta} and \eqref{delta0}, it gives
\begin{equation}\label{est-6}
    \begin{aligned}
|\Delta_0(t)|
&\le \|\delta_0(\cdot,t)\|_{L^\infty(0,l(t))} \|\hat w(\cdot,t)\|_{L^2(0,l(t))} \|v(\cdot,t)\|_{L^2(0,l(t))} \\
&\le \varepsilon C_I \|\hat w(\cdot,t)\|_{L^2(0,l(t))}^2 = 2\varepsilon C_I V(t).
\end{aligned}
\end{equation}
Moreover, similar to \eqref{est-6} and \eqref{delta1}, we get
\begin{equation}\label{est-7}
    \begin{aligned}
|\Delta_1(t)|
&\le \varepsilon \sqrt{l^*} \|v(\cdot,t)\|_{L^2(0,l(t))} \int_0^{l(t)} |\hat w(x,t)|\,dx \\
&\le \varepsilon l^* \|v(\cdot,t)\|_{L^2(0,l(t))} \|\hat w(\cdot,t)\|_{L^2(0,l(t))} \le 2\varepsilon l^* C_I V(t).
\end{aligned}
\end{equation}
Combining \eqref{eq:Vdot_main} with \eqref{est-6} and \eqref{est-7} yields
\begin{equation}\label{eq:V28}
    \dot V \le -\alpha\|\hat w_x(\cdot,t)\|_{L^2(0,l(t))}^2 - c\|\hat w(\cdot,t)\|_{L^2(0,l(t))}^2 + 2\varepsilon C_I(1+l^*)V(t).
\end{equation}
Thanks to $\hat w(0,t)=0$,  applying the Poincar\'e inequality, we have
\begin{equation}\label{3.29}
    \|\hat w(\cdot,t)\|_{L^2(0,l(t))}^2 \le \frac{(l^*)^2}{2}\|\hat w_x(\cdot,t)\|_{L^2(0,l(t))}^2.
\end{equation}
Substituting \eqref{3.29} into \eqref{eq:V28}, it follows that
\begin{equation*}
    \dot V \le -\frac{4\alpha}{(l^*)^2}V - 2cV + 2\varepsilon C_I(1+l^*)V = -(\mu_0-C_0\varepsilon)V,
\end{equation*}
where $\mu_0:=\frac{4\alpha}{(l^*)^2}+2c$ and $C_0:=2C_I(1+l^*)$. 
It can be calculated that $V(t)\le V(0)e^{-{(\mu_0-C_0\varepsilon)}{(t-t_0)}}$.  {It remains to verify that $\mu_0-C_0\varepsilon>0$. By the definitions of
$C_0$, $C_I$, $\tau_Q$, and $\tau_K$, we obtain
\begin{equation*}
\mu_0-C_0\varepsilon
=
\frac{4\alpha}{(l^*)^2}+2c
-
2(1+l^*)\left(
1+\frac{l^*(M+\varepsilon)e^{(M+\varepsilon)l^*}}{\sqrt{2}}
\right)\varepsilon .
\end{equation*}
Therefore,  there exists
$\varepsilon^*>0$ such that, for every $\varepsilon\in(0,\varepsilon^*)$,
\begin{equation*}
    \mu_0-C_0\varepsilon>0.
\end{equation*}
}
Finally, using \eqref{eq:inverse_norm_est}, we get
\begin{equation*}
    \|v(\cdot, t)\|_{L^2(0,l(t))} \le C_I \|\hat{w}(\cdot, t)\|_{L^2(0,l(t))} = C_I \sqrt{2V(t)} \le C_I \sqrt{2V(0)} e^{-\frac{\mu_0-C_0\varepsilon}{2} (t-t_0)}.
\end{equation*}
From \eqref{eq:direct_norm_est}, we conclude that
\begin{equation*}
    \|v(\cdot, t)\|_{L^2(0,l(t))} \le C_I C_D e^{-\frac{\mu_0-C_0\varepsilon}{2} (t-t_0)} \|v(\cdot, t_0)\|_{L^2(0,l(t_0))}.
\end{equation*}
The proof is complete.
\end{proof}
\section{Numerical Simulation}
\label{sec:simulation}
In this section, we demonstrate the effectiveness of the proposed DeepONet-based controller in stabilizing the  parabolic equation \eqref{eq:u_system_original} on a time-varying domain, while highlighting its computational advantage over the successive integration method.

 The system parameters are chosen as $\alpha=1$, $\lambda_{0}=20$ and $c=10$, initial value $\sin(3\pi x)$ and the moving boundary function is defined by $l(t)=1+0.5\sin(\gamma\pi t)$.  To generate the dataset for DeepONet, we sampled 1000 values of $\gamma$ from the uniform distribution $U(0.05,0.95)$. For each sampled value of $\gamma$, we numerically solved the time-varying kernel PDE \eqref{eq:xietat} on the triangular domain $\bar{S}$, using the successive integration method.
 
 The computation was performed on a uniform grid over the time interval $[0,T]$, with $T=4\,\mathrm{s}$ and time step $dt=0.01\,\mathrm{s}$. Specifically, the kernel was represented by the truncated series $K(\xi, \eta, t)\approx \sum_{n=1}^{N_s}\bar{K}_{l,n}(\xi,\eta,t)$, where $\bar{K}_{l,n}$ is defined in \cite[Equation (19)-(21)]{izadi2015pde}. The spatial integrals were evaluated using the trapezoidal rule. The series expansion was truncated when either $\max |\bar{K}_{l_i,n}|<10^{-8}$ was satisfied or  $N_s=100$ was reached. We randomly sampled 500 points uniformly within the valid triangular region for each $\gamma$. The data were split into training and validation sets with an $80\%$--$20\%$ ratio.

With this dataset, we trained DeepONet using an Nvidia RTX 4080 Super GPU. After 2000 epochs of training, the NO achieved a training MSE loss of $1.118\times10^{-2}$ and a validation MSE loss of $1.470\times10^{-2}$.

\begin{table}[htbp]
    \centering
    \begin{tabular}{lccc}
        \toprule 
        \textbf{\makecell{Spatial\\
        Step \\Size \\ }} & 
        \textbf{\makecell{Numerical kernel \\ computational \\ time (s)}} & 
        \textbf{\makecell{NO Kernel \\ computational \\ time (s)}} & 
        \textbf{Speedup}\\
        \midrule 
        0.0200   & 46.5459  & 0.0617 & 754.5x \\
        0.0100 & 175.8013 & 0.2291 & 767.4x \\
        0.0050 & 730.4583 & 0.8953 & 815.9x \\
        \bottomrule 
    \end{tabular}
    \caption{NO speedups for various spatial discretizations.}
    \label{tab:time_comparison}
\end{table}

With the trained DeepONet model, we computed the backstepping kernels for $\gamma=1/3$ that was not included in the training dataset, corresponding to $l(t) = 1 + \frac{1}{2} \sin(\pi t / 3)$, and compared the computational performance with the successive integration method.

 Table \ref{tab:time_comparison} compares the computational time of two approaches: the successive integration method and the proposed NO-based method.
The results show a clear difference between the two approaches. When the spatial step size is $0.020$, the numerical method requires $46.5459\,\mathrm{s}$, whereas the NO-based method only requires $0.0617\,\mathrm{s}$, giving a speedup of $754.5\times$. As the grid is refined, the computational time of the numerical method increases rapidly. For the step size $0.005$, the numerical method's computation time reaches $730.4583\,\mathrm{s}$, while the NO-based method still takes only $0.8953\,\mathrm{s}$. This corresponds to the largest speedup, namely $815.9\times$.

The difference in time is directly related to the online computation required by each method. The numerical method generates the kernel by recursively computing the series terms $\bar{K}_{l,1},\ldots, \bar{K}_{l,N_s}$, where each step involves double integrals. Hence, refining the spatial step size increases the number of grid points and leads to a rapid growth in computation time. In contrast, the NO-based method does not compute these recursive integral terms online. 

\begin{figure}[htbp]
    \centering
    \includegraphics[width=0.68\textwidth]{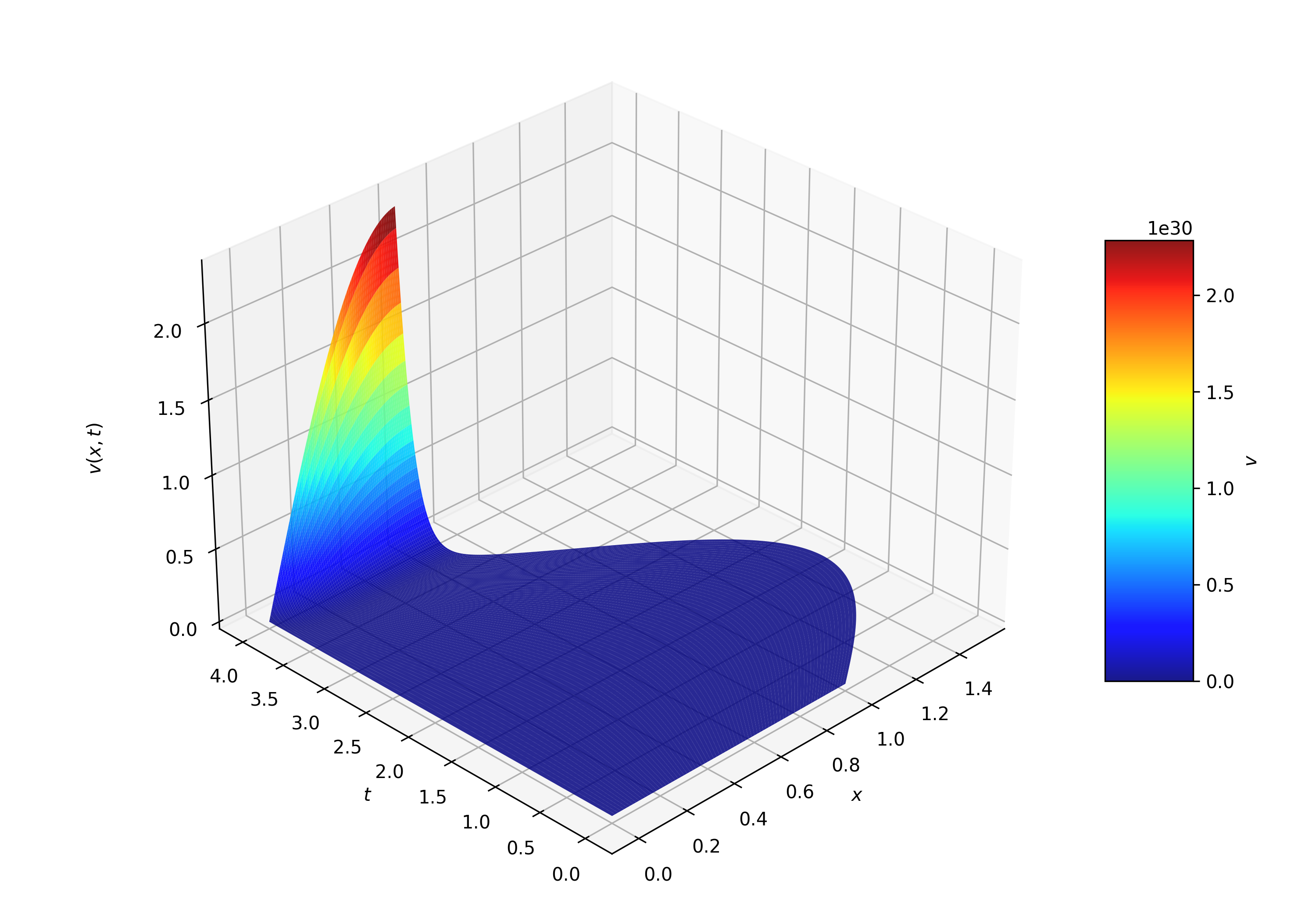}
    \caption{Evolution of the open-loop state $v(x,t)$.}
    \label{fig:open_loop}
\end{figure}

We then utilize the DeepONet kernels to design the controller \eqref{eq:approx_control_original} to be applied to the original PDE model \eqref{eq:plant_v1}. In the numerical simulations, the neural operator $\hat{\mathcal K}$ is trained to approximate the operator $\mathcal K$. Once $\hat{\mathcal K}$ is obtained, we used the inverse transformation of \eqref{eq:transformation} so that we can reconstruct the kernel $\hat{k}(x,y,t)$, which is then used to construct the feedback control law $\hat U(t)$ in \eqref{eq:approx_control_original}.

As shown in Fig.~\ref{fig:open_loop}, under the condition $U(t) = 0$, the state $v(x,t)$ exhibits exponential growth over time, which means that the open-loop plant is intrinsically unstable. Fig.~\ref{fig:kernel1.5}, Fig.~\ref{fig:kernel2.5} and Fig.~\ref{fig:kernel3.5} presents the 3D surface snapshots of both the numerical kernel $K(\xi, \eta, t)$ based on the successive integration method   and the DeepONet prediction $\hat{K}(\xi,\eta,t)$ at $t = 1.5s$, $t=2.5s$ and $t=3.5s$ respectively. The predicted kernel surfaces $\hat{K}(\xi,\eta,t)$ exhibit excellent agreement with the numerical solutions ${K}(\xi,\eta,t)$  on the triangular domain $\bar{S}$. The neural operator demonstrates outstanding approximation accuracy. For the unseen test case with $\gamma = 1/3$, the DeepONet achieves a remarkably low overall relative $L^2$ error of $9.908 \times 10^{-4}$. This extremely low global error firmly verifies the neural operator's capability in learning the underlying parameter-to-kernel mapping. 

\begin{figure}[htbp]
    \centering
    \includegraphics[width=0.8\textwidth]
    {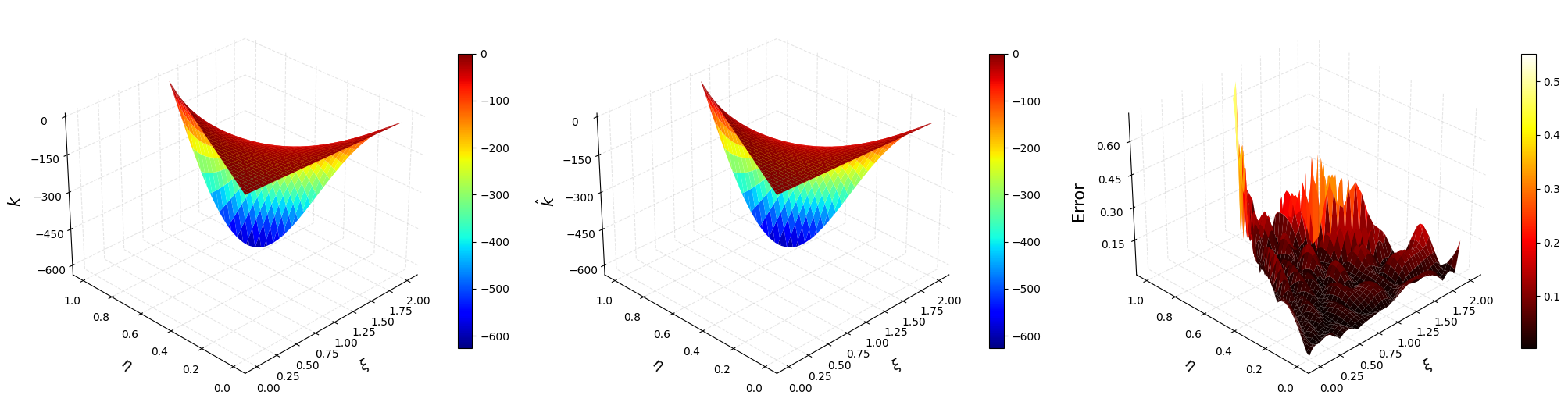}
     \caption{Example of the kernel ${K}(\xi,\eta,t)$ (left),  learned kernel $\hat{K}(\xi,\eta,t)$ (middle), and the error $K-\hat K$ (right) at $t=1.5\,\mathrm{s}$.}
    \label{fig:kernel1.5}
\end{figure}

\begin{figure}[htbp]
    \centering
    \includegraphics[width=0.8\textwidth]
    {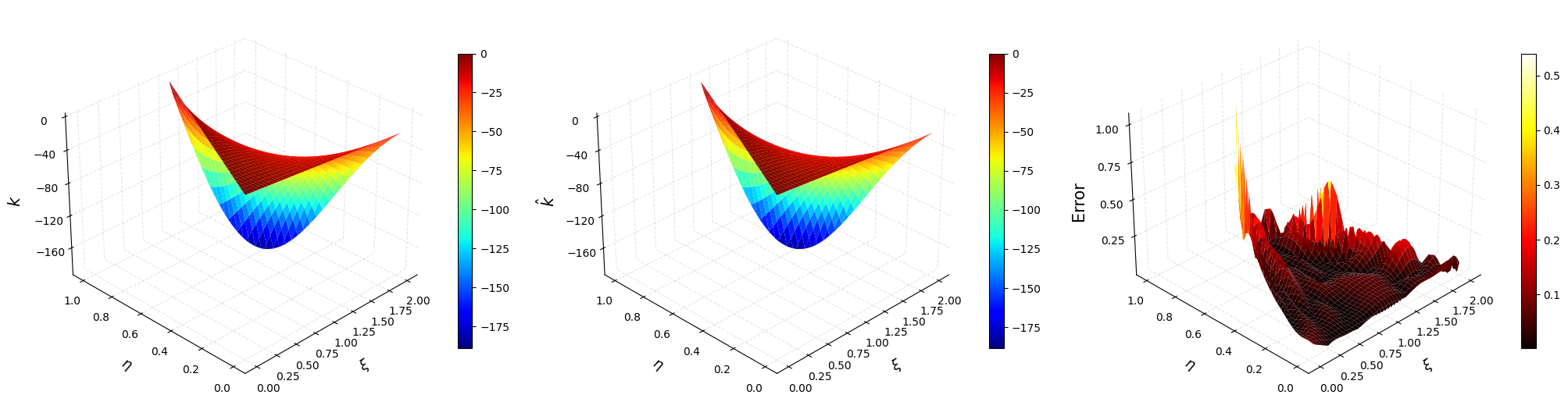}
     \caption{Example of the kernel ${K}(\xi,\eta,t)$ (left),  learned kernel $\hat{K}(\xi,\eta,t)$ (middle), and the error $K-\hat K$ (right) at $t=2.5\,\mathrm{s}$.}
    \label{fig:kernel2.5}
\end{figure}

\begin{figure}[htbp]
    \centering
    \includegraphics[width=0.8\textwidth]
    {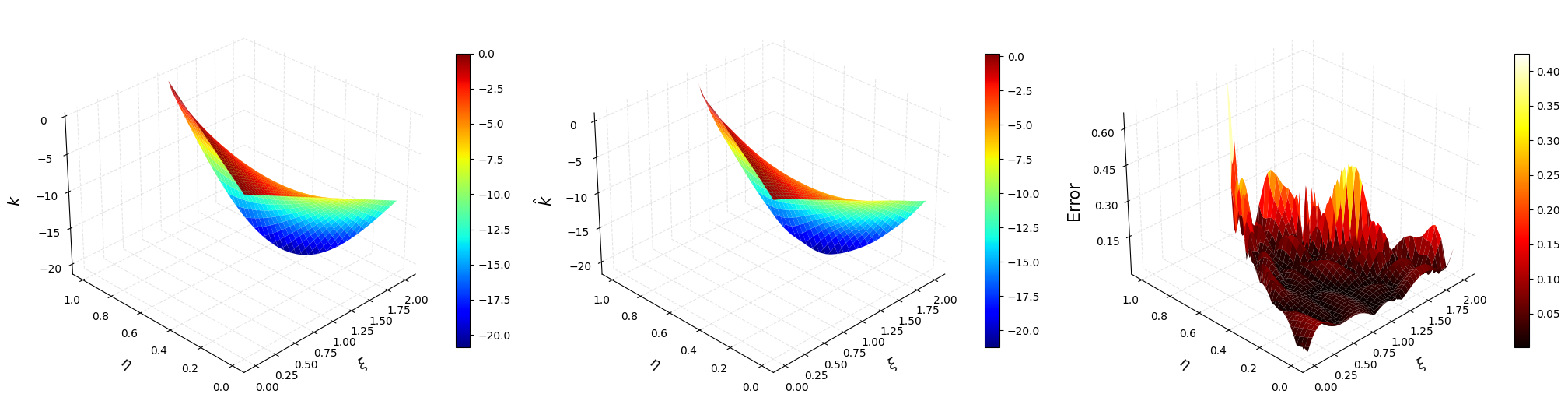}
    \caption{Example of the kernel ${K}(\xi,\eta,t)$ (left),  learned kernel $\hat{K}(\xi,\eta,t)$(middle), and the error $K-\hat K$ (right) at $t=3.5\,\mathrm{s}$.}
    \label{fig:kernel3.5}
\end{figure}
Having verified the accuracy of the kernel approximation, the predicted kernel $\hat{K}(\xi,\eta,t)$ and its spatial derivative are used to compute the feedback control law\eqref{eq:approx_control_original}. Then, we present the state evolution of the closed-loop system in Fig.~\ref{fig:closed_loop_state}.  In contrast to the open-loop response, the implementation of the DeepONet-based controller successfully suppresses the exponential growth, rapidly driving the state $v(x,t)$ to the origin within the time-varying spatial domain.

\begin{figure}[htbp]
    \centering
    \includegraphics[width=0.65\textwidth]
    {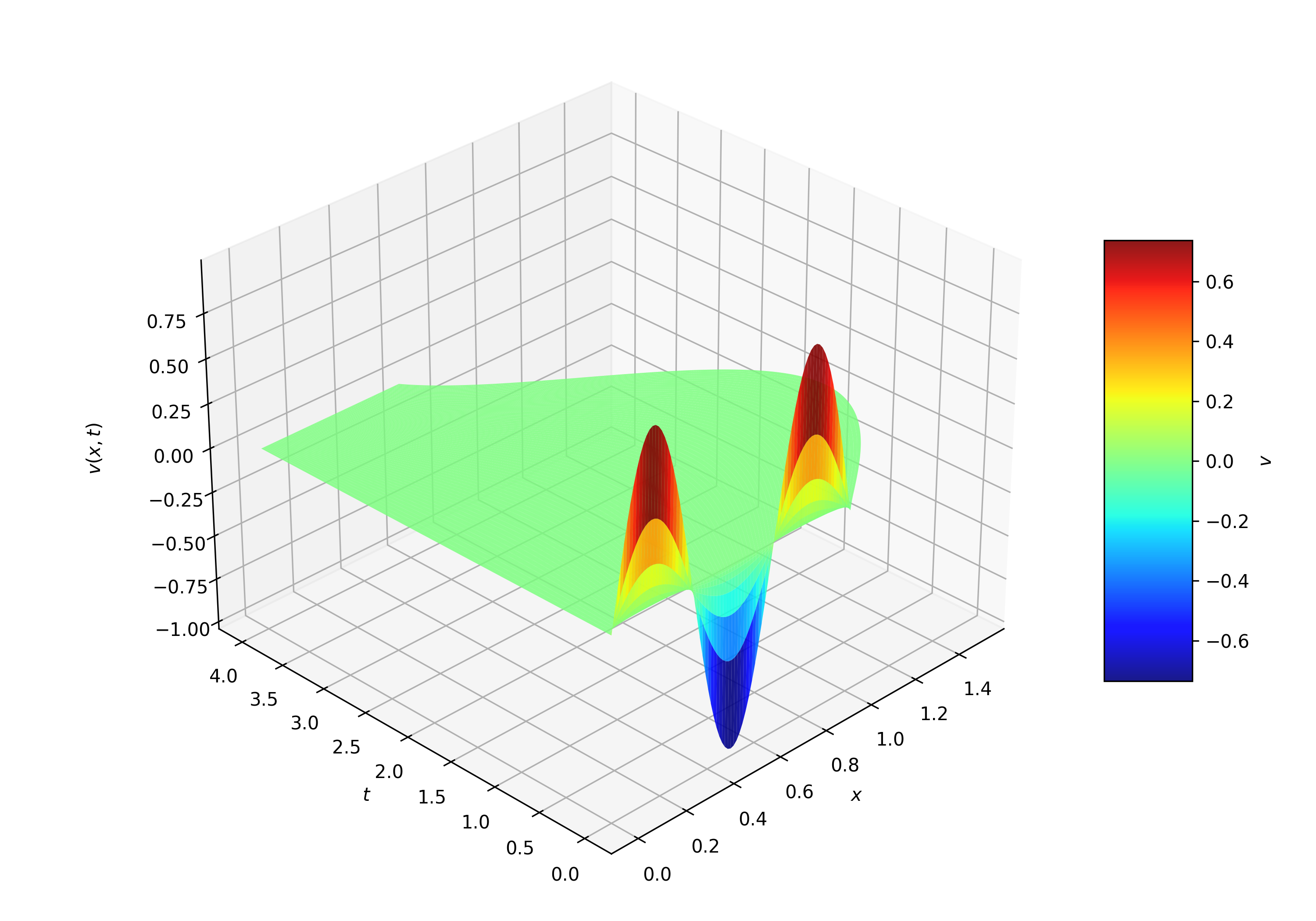}
    \caption{Closed-loop evolution of the state $v(x,t)$ under the DeepONet-based feedback control law.}
    \label{fig:closed_loop_state}
\end{figure}
\begin{figure}[htbp]
    \centering
    \includegraphics[width=0.6\textwidth]
    {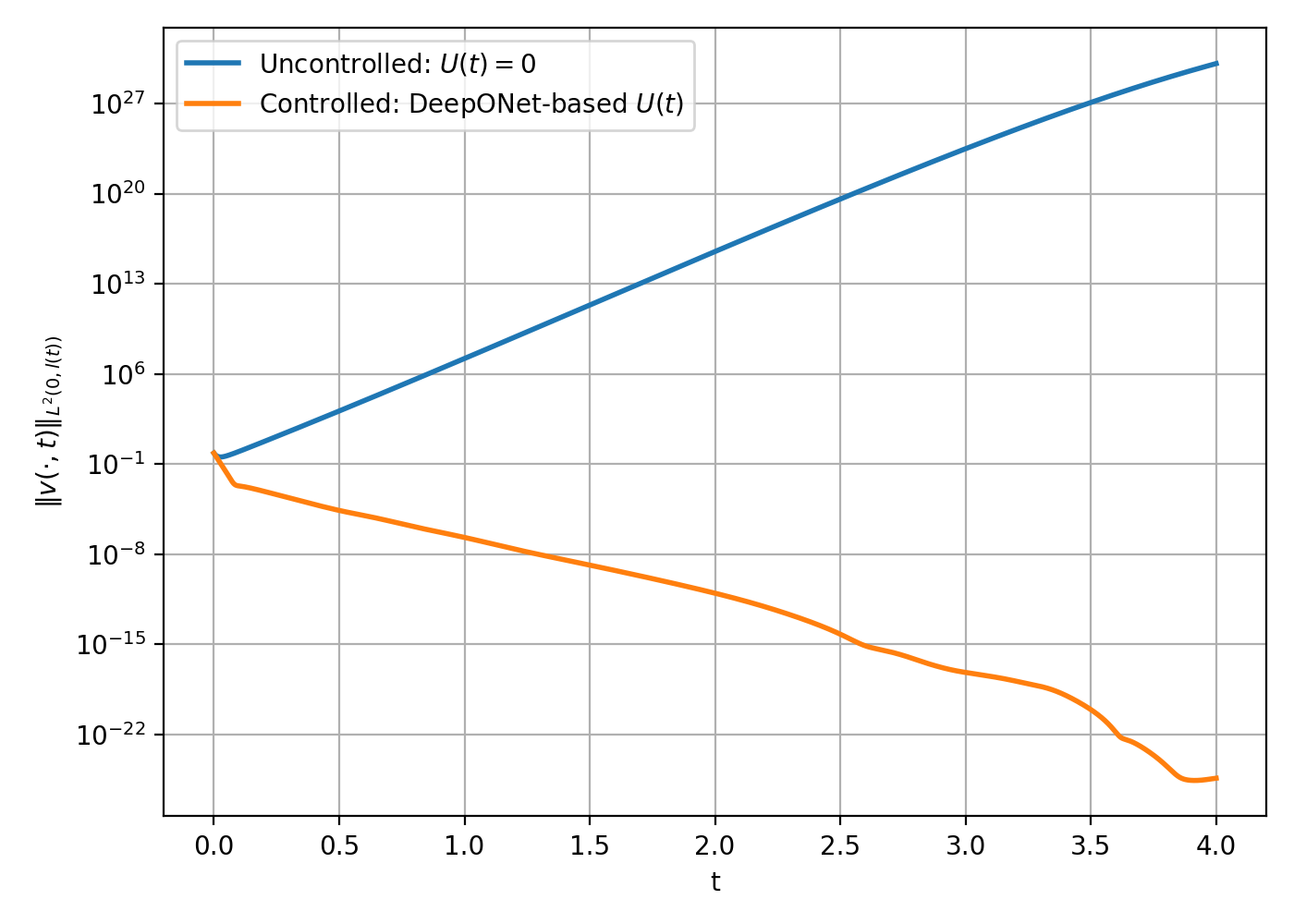}
    \caption{Comparison of the $L^2$-norm evolution between the open-loop and closed-loop systems.}
    \label{fig:norm_comparison}
\end{figure}

Fig.~\ref{fig:norm_comparison} provides a comparison of the $L^2$-norm of the system state under both open-loop and closed-loop scenarios. The rapid decay of the closed-loop norm confirms the theoretical conclusion that the learned kernel guarantees the exponential stability of the target system.

 \section{Conclusion} 
 \label{sec:conclusion}
In this paper, we proposed a novel neural-operator-based framework for the boundary stabilization of systems defined on a time-varying spatial domain. A fundamental theoretical contribution of this work is establishing the existence of a neural-operator approximation for the time-varying backstepping design operator. To overcome the structural challenges posed by $l(t)$-dependent domain, we introduced a coordinate transformation that reformulates the kernel PDE onto a fixed domain. By proving the continuous dependence of the kernel function on the moving-boundary trajectory, we ensured that the mapping satisfies the prerequisites of the Universal Approximation Theorem for operators. Furthermore, we provided a rigorous mathematical guarantee for stabilizing this class of systems. We proved that the numerical kernel can be directly replaced with  neural approximation. This substitution preserves the exponential stability of the closed-loop system.

From a computational perspective, the proposed DeepONet-based framework effectively circumvents the prohibitive online computational burden traditionally associated with dynamic meshing and repeated numerical integration. By shifting the heavy computational burden to an offline training phase, our approach achieves an acceleration of close to three orders  of magnitude ($10^3 \times$) compared to existing numerical methods. 

Building upon the current framework, future research will be directed toward extending this operator-learning framework to multi-dimensional moving-boundary problems, where the topological complexities and computational bottlenecks are even more pronounced. Additionally, it is also an interesting direction to extend the current stabilization framework to tracking problems, exploring the application of neural operators for real-time tracking control.
	\section*{Funding}
	This work was supported by National Natural Science Foundation of China NSFC-62473281, 12571484.
	\bibliographystyle{plain}
	\bibliography {myref.bib} 
\end{document}